\newtheorem{thm}{Theorem}[section]
\newtheorem{lem}{Lemma}[section]
\newtheorem{coroll}{Corollary}[section]
\theoremstyle{definition}
\newtheorem{defn}{Definition}[section]
\theoremstyle{remark}
\newtheorem{remark}{Remark}[section]
\definecolor{cucol}{rgb}{0,0,0.8}
\definecolor{afcol}{rgb}{1,0,0}
\numberwithin{equation}{section}
\begin{document}
	
%	\begin{frontmatter}
		
		\title{A natural extension of Mittag-Leffler function associated with a triple infinite series}
		
		%%this line removes the date, but space is still left for it;
		%if used, remove the \vspace{-1cm}
		\date{}
		
		%this gives the date in the form Mon 30 Jan 2012, 8:57pm;
		%if used, retain the \vspace{-1cm}
		%\date{\shortdayofweekname{\day}{\month}{\year}{ }\mydate\today}

		\author[]{Ismail T. Huseynov\thanks{ Email:\texttt{ismail.huseynov@emu.edu.tr}}}
		\author[]{Arzu Ahmadova\thanks{Email: \texttt{arzu.ahmadova@emu.edu.tr}} }
		\author[]{ Gbenga O. Ojo \thanks{Corresponding author.  Email: \texttt{ojo.gbenga@emu.edu.tr}}}
		\author[]{Nazim I. Mahmudov \thanks{Email: \texttt{nazim.mahmudov@emu.edu.tr}}}

		\affil[] {Department of Mathematics, Faculty of Arts and Sciences, Eastern Mediterranean University, Mersin 10, Gazimagusa, TRNC, Turkey}

		% Latex won't make the title unless told:
		
		\maketitle
		
		%%to remove the space left for date, use:

\begin{abstract}
\noindent We establish a new natural extension of Mittag-Leffler function with three variables which is so called "trivariate Mittag-Leffler function". The trivariate Mittag-Leffler function can be expressed via complex integral representation by putting to use of the eminent Hankel's integral. We also investigate Laplace integral relation and convolution result for a univariate version of this function. Moreover, we present fractional derivative of trivariate Mittag-Leffler function in Caputo type and we also discuss Riemann--Liouville type fractional integral and derivative of this function. The link of trivariate Mittag-Leffler function with fractional differential equation systems involving different fractional orders is necessary on certain applications in physics. Thus, we provide an exact analytic solutions of homogeneous and inhomogeneous multi-term fractional differential equations by means of a newly defined trivariate Mittag-Leffler functions.

\textbf{Keywords:} Caputo fractional derivative, special functions, bivariate Mittag-Leffler function, trivariate Mittag-Leffler function, multi-term differential equation 
\end{abstract}  
		
%	\end{frontmatter}

\section{Introduction and preliminaries}\label{Sec:intro}
\textbf{Special functions} are one of the powerful implements in presenting and describing some physical complex phenomena in fractional calculus \cite{Kiryakova-1}-\cite{Kalla et al.}.
Decades ago, the special function entitled
\textbf{Mittag-Leffler function (M-L)} has drawn an arising attention by many researchers due to its importance in solving differential and integral equations with fractional-order in science and engineering \cite{viscoelasticity}-\cite{cell}.

The classical M--L function which is a natural generalization of the exponential function was proposed by Mittag-Leffler in 1903 as a one-parameter function of one variable by using a single series,
\begin{equation}\label{ml-1}
E_{\alpha}(s)= \sum_{l=0}^{\infty}\frac{s^{l}}{\Gamma(l \alpha +1)},\quad \alpha \in \mathbb{C},  \Re(\alpha) >0, s\in\mathbb{C},
\end{equation}
and investigated its properties in \cite{ML-1}-\cite{ML-4}.
\begin{remark}[\cite{Diethelm}]
The M--L functions are often used in a form where the variable inside the brackets is not $s$ but a fractional power $r^{\alpha}$, or even a constant multiple $\lambda r^{\alpha}$, as follows:
\begin{equation*}
E_{\alpha}(\lambda r^{\alpha})= \sum_{l=0}^{\infty}\frac{\lambda^l r^{l\alpha}}{\Gamma(l \alpha +1)}, \quad \alpha \in \mathbb{C}, \Re(\alpha) >0, \lambda, r\in\mathbb{C}.
\end{equation*}
\end{remark}

A generalization of \eqref{ml-1}, particularly the two-parameter M--L function was presented by Wiman in 1905 and he determined this function \cite{Wiman-1,Wiman-2} as 

\begin{equation}\label{ml-2}
E_{\alpha,\beta}(s)= \sum_{l=0}^{\infty}\frac{s^{l}}{\Gamma(l \alpha +\beta)},\quad \alpha, \beta\in\mathbb{C}, \Re(\alpha) >0, s\in\mathbb{C},
\end{equation}
which has deeply studied in \cite{Humbert}-\cite{Humbert-Agarwal}.

A natural extension of \eqref{ml-2}, which is called the M--L with three-parameter was proposed by Prabhakar \cite{Prabhakar} in 1971  as

\begin{equation} \label{Prabf}
E_{\alpha,\beta}^{\delta}(s)= \sum_{l=0}^{\infty}\frac{(\delta)_l}{\Gamma(l \alpha +\beta)}\frac{s^{l}}{l!}, \quad \alpha,\beta,\delta \in\mathbb{C},\Re(\alpha)> 0,  s\in\mathbb{C},
\end{equation}
where $(\delta)_l$ is the Pochhammer symbol \cite{Rainville} denoting

\begin{equation*}
(\delta)_l=\frac{\Gamma(\delta + l)}{\Gamma(\delta)}= \begin{cases*}
1, \quad l=0,\delta \neq 0, \\
\delta (\delta + 1)\cdots(\delta + l - 1), \quad l \in \mathbb{N}.
\end{cases*}
\end{equation*} 
 This series \eqref{Prabf} is widely used for different applied problems like heat conduction equations with memory \cite{Garra-Garrapa}, electrical circuits \cite{Soubhia et.al.}, Langevin equations \cite{Camargo et.al.},  anomalous relaxation in dielectrics \cite{Oliveira et.al.} and fractional order time-delay systems \cite{Huseynov-Mahmudov,mhmdv,mahmudov-areen}.
 Note that
 \begin{equation*}
 E_{\alpha,\beta}^1(s)=E_{\alpha,\beta}(s),\quad E_{\alpha,1}(s)=E_{\alpha}(s),\quad E_1(s)=\exp(s).
 \end{equation*}
 It is interesting to note that extensions to two or three parameters are well known and thoroughly studied in textbooks \cite{Gorenflo,Paneva}, but these still involve single power series in one variable. Recently decades, a various type of extensions of M--L functions have been defined: namely "bivariate" and "multivariate" M--L functions. 

 A multi-variable analogue of generalized Mittag-Leffler type function is proposed by Saxena et al. \cite{saxena-kalla-saxena} in the form 
\begin{equation}\label{multivariate}
\mathcal{E}^{(\delta_1,...,\delta_m)}_{(\rho_1,...,\rho_m),\lambda}(s_1,...,s_m)=\sum\limits_{l_1,...,l_m=0}^{\infty}\frac{(\delta_1)_{l_{1}}...(\delta_m)_{l_{m}}}{\Gamma(\lambda + \sum\limits_{j=1}^{m}\rho_j l_j)}\frac{s_1^{l_1}... s_m^{l_m}}{l_1!...l_m!},
\end{equation}
 where $\lambda,\rho_j,\delta_j,s_j \in \mathbb{C},\Re(\rho_j) >0, j=1,...,m.$
 
 Another multivariable analogue of M--L function $E_{\alpha_1,...,\alpha_n}(s_1,...,s_n)$ of $n$ variables $s_1,...,s_n \in \mathbb{C}$ and $n$ parameters $\alpha_1,...,\alpha_n,\beta \in \mathbb{C}$ with $\Re(\alpha_j),\Re(\beta)>0,j=1,...,n$ is defined by
 
 \begin{equation}\label{multidimensional}
 E_{\alpha_1,...,\alpha_n}(s_1,...,s_n)=\sum\limits_{k=0}^{\infty}\sum^{l_1+...l_n=k}_{l_1\geq 0,...,l_n \geq 0}(k;l_1,...,l_n)\frac{\prod\limits_{j=1}^{n} s_j^{l_j}}{\Gamma(\beta+\sum\limits_{j=1}^{n}\alpha_j l_j)},
 \end{equation}
 where
 \begin{equation*}
 (k;l_1,...,l_n)\coloneqq \frac{k!}{l_1!\times...\times l_n!} \quad \text{with} \quad k=\sum_{j=1}^{n}l_j.
 \end{equation*}
 This function \eqref{multidimensional} is studied by Luchko et al. \cite{Luchko-Gorenflo} as an analytical solution of the Caputo type fractional differential equations (FDEs) with multi-orders.
 
 It is necessary to point out that various bivariate functions are rising as an extension of the M--L function: one of them is proposed by Özarslan et al. \cite{Ozarslan-Kurt} as below:
 \begin{equation}
 E^{(\rho)}_{\alpha,\beta,q}(u,v)=\sum\limits_{l=0}^{\infty}\sum_{p=0}^{\infty}\frac{(\rho)_{l+p}}{\Gamma(\beta+qp)\Gamma(\alpha +l)}\frac{u^l}{l!}\frac{v^{qp}}{p!},
 \end{equation} 
 under the conditions $\alpha,\beta,q,\rho \in \mathbb{C} \quad \text{with} \quad  \Re(\alpha),\Re(\beta)$, and $\Re(q)>0$.
 
 Another a new analogue of classical M--L function which is applied two variables are proposed by Fernandez et al. \cite{Fernandez-Kurt-Ozarslan} as follows:
\begin{equation}\label{bML4}
E_{\alpha,\beta,\gamma}^{\delta}(u,v)=\sum_{l=0}^{\infty}\sum_{p=0}^{\infty}\frac{(\delta)_{l+p}}{\Gamma(l\alpha+p\beta+\gamma)}\frac{u^lv^p}{l!p!},\quad \alpha, \beta, \gamma, \delta \in\mathbb{C},\quad \Re(\alpha), \Re(\beta)>0, u,v\in\mathbb{C}.
\end{equation}
To form a univariate version of \eqref{bML4}, we write $u=\lambda_{1}r^{\alpha}$ and $v=\lambda_{2}r^{\beta}$ and multiply by a power function:
\begin{equation}\label{bivtype}
r^{\gamma-1}	E_{\alpha,\beta,\gamma}^{\delta}(\lambda_{1}r^{\alpha},\lambda_{2}r^{\beta})=\sum_{l=0}^{\infty}\sum_{p=0}^{\infty}\frac{(\delta)_{l+p}}{\Gamma(l\alpha+p\beta+\gamma)}\frac{\lambda_{1}^l\lambda_{2}^p}{l!p!}r^{l\alpha+p\beta+\gamma-1}.
\end{equation}
It is worth noting that the univariate analogue of \eqref{bML4} is an exact analytical solution of FDEs system with  multi-orders which has been discussed in \cite{ismail-arzu}.

Now, we consider another special function which will be introduced later in Section \ref{sec:multi}. Let $\lambda_{i},\mu_{j} \in \mathbb{C}$, $a_{i},b_{j} \in \mathbb{R}$, for $i=1,2,\ldots,m$, and $j=1,2,\ldots,n$. Generalized Wright function or more appropriately Fox-Wright function $\prescript{}{m}{\Psi_{n}}(\cdot):\mathbb{C}\to \mathbb{C}$ is defined by
\begin{equation}\label{fox}
\prescript{}{m}{\Psi_{n}}(s)=\prescript{}{m}{\Psi_{n}}\left[\begin{array}{ccc}
(\lambda_{i},a_i)_{1,m} \\
(\mu_j,b_j)_{1,n} 
\end{array}\Big|s\right]=\sum_{l=0}^{\infty}\frac{\prod\limits_{i=1}^{m}\Gamma(\lambda_{i}+a_{i}l)}{\prod\limits_{j=1}^{n}\Gamma(\mu_{j}+b_{j}l)}\frac{s^{l}}{l!}.
\end{equation}
This Fox-Wright function was established by Fox \cite{fox} and Wright \cite{wright}. If the following condition is satisfied 
\begin{equation*}
\sum_{j=1}^{n}b_j-\sum_{i=1}^{m}a_{i}>-1.
\end{equation*}
then this series in \eqref{fox} is uniformly convergent for arbitrary $s\in \mathbb{C}$.

\textbf{Fractional calculus} is one of the fields of mathematical analysis which copes with exploration of fractional differential and integral operators which are non-local and work more accurative modelling ways than their appropriate integer-order versions. Fractional-order operators are more productive in modelling different disciplines like visco-elasticity \cite{viscoelasticity}, anomalous diffusion \cite{anomalous diffusion}, thermodynamics \cite{thermodynamics}, biophysics \cite{biophysics} and other areas.

We define some essential definitions related to fractional calculus that is going to be used throughout the paper.
 \begin{defn} {\cite{Kilbas,Samko et al.,Podlubny}}
 The Riemann-Liouville (R--L) fractional integral  of  order $\alpha \in \mathbb{C}$  with $\Re(\alpha)>0$ for a function   $g:[0,\infty)\to \mathbb{R}$ is defined by
 \begin{equation} \label{R-L integral}
 \prescript{}{}(I^{\alpha}_{a^{+}}g)(r)=\frac{1}{\Gamma(\alpha)}\int_a^r(r-s)^{\alpha-1}g(s)\,\mathrm{d}s \\,\quad \text{for} \quad r>a,
 \end{equation}
 \end{defn}
\begin{defn}\cite{Rainville}
The gamma function is defined as:
\begin{equation} \label{gamma}
\Gamma(\alpha)= \int_{0}^{\infty}\tau^{\alpha-1}e^{-\tau}\mathrm{d}\tau,  \alpha \in \mathbb{C} \quad \text{with} \quad \Re(\alpha) > 0.
\end{equation}
\end{defn}
 \begin{defn} \label{beta} \cite{Rainville}
 The beta function is defined as below:
 \begin{equation}
 B(c,d)=\int_{0}^{1}\tau^{c-1}(1-\tau)^{d-1}\mathrm{d}\tau, \quad  \text{for} \quad c,d \in \mathbb{C} \quad \text{with} \quad \Re(c),\Re(d)>0. 
 \end{equation}
 \end{defn}
 Furthermore, the beta function can be expressed with the aid of gamma functions \cite{Rainville} as below:
 \begin{equation*}
 B(c,d)=\frac{\Gamma(c)\Gamma(d)}{\Gamma(c+d)},\quad \text{for} \quad c,d \in \mathbb{C} \quad \text{with} \quad \Re(c),\Re(d)>0. 
 \end{equation*}
\begin{defn} {\cite{Kilbas,Samko et al.,Podlubny}}
 The R--L fractional derivative of order $\alpha \in \mathbb{C}$ with $\Re(\alpha)>0$ for a function  $g: [0, \infty) \to \mathbb{R}$ is defined by 
 \begin{equation} \label{R-L derivative}
 (\prescript{}{}D^{\alpha}_{a^{+}}g)(r)=\frac{\mathrm{d}^{n}}{\mathrm{d} r^{n}}(I^{n-\alpha}_{a^{+}}g)(r)\coloneqq\frac{1}{\Gamma(n-\alpha)}\frac{\mathrm{d}^{n}}{\mathrm{d} r^{n}}\int_a^r (r-s)^{n-\alpha-1}g(s)\mathrm{d}s, \quad \text{for} \quad n=\lfloor \Re(\alpha) \rfloor + 1 , r>a.
 \end{equation}
\end{defn}
\begin{defn}{\cite{Diethelm,Kilbas,Podlubny}}
 The Caputo fractional derivative  of  order $\alpha \in \mathbb{C}$ with $\Re(\alpha)>0$ for a function $g:[0,\infty)\to \mathbb{R}$ is defined by
 \begin{equation}
  (\prescript{C}{}D^{\alpha}_{a^{+}}g)(r)=\prescript{}{}I^{n-\alpha}_{a^{+}}\left( \frac{\mathrm{d}^n}{\mathrm{d} r^n}g\right)(r) \coloneqq\frac{1}{\Gamma(n-\alpha)}\int_a^r(r-s)^{n-\alpha-1}\frac{\mathrm{d}^n}{\mathrm{d}s^n}g(s)\,\mathrm{d}s,\quad \text{for} \quad n=\lfloor \Re(\alpha) \rfloor + 1 , r>a.
 \end{equation}
 In particular,
 \begin{equation*}
 \prescript{}{}I^{\alpha}_{a^{+}}\prescript{C}{}D^{\alpha}_{a^{+}}g(r)=g(r)-g(a) \quad \text{where} \quad 0<\alpha<1,\quad  r>a.
 \end{equation*} 
\end{defn}
\begin{defn} \cite{Diethelm,Kilbas}\label{caputo}
 The Caputo fractional derivative of order $\alpha \in (0,1)$ for a function $g:[0, \infty) \to \mathbb{R}$ can be written as 
 \begin{equation} \label{case0}
 (\prescript{C}{}{D^{\alpha}_{0^{+}}g})(r)= (\prescript{}{}{D^{\alpha}_{0^{+}}g})(r)-\frac{g(0)}{\Gamma(1-\alpha)}r^{-\alpha},\quad r>0.
 \end{equation}
\end{defn}

\textbf{Fractional differential equations} are a generalization of the classical ordinary and partial differential equations, in which the order of differentiation is permitted to be any real (or even complex) number, not only a natural number. FDEs are widely used to model mathematical problems in stability theory \cite{stability theory,ahmadova-mahmudov}, positive time-delay systems \cite{positive systems}, control theory \cite{control theory}, stochastic analysis \cite{Arzu}, electrical circuits \cite{electrical circuits,positive linear systems} and other areas.

FDEs containing not only one fractional derivative \cite{cell,Garra-Garrapa,Soubhia et.al.,Camargo et.al.,Oliveira et.al.,Huseynov-Mahmudov} but also more than one fractional derivative are intensively studied in many physical processes. Many authors demonstrate two essential mathematical ways to use this idea: multi-term equations \cite{Luchko-Gorenflo,Bazhlekova,Joel et al.,huseynov-mahmudov-1} and multi-order systems \cite{ismail-arzu,positive linear systems}.

 Multi-term FDEs have been studied due to their applications in modelling, and solved using various mathematical methods. Luchko and Gorenflo \cite{Luchko-Gorenflo} solved the following multi-term FDEs with constant coefficients and with the Caputo fractional derivatives  by using the method of operational calculus.

\begin{thm}\cite{Luchko-Gorenflo}
Let $\alpha>\alpha_1> \cdots > \alpha_n \geq 0$, $l_i-1 < \alpha_i\leq l_i$, $l_i \in \mathbb{N}_0=\mathbb{N} \cup \left\lbrace 0\right\rbrace$, $\mu_{i} \in \mathbb{R}, i=1, \cdots, n$. The initial value problem (IVP)
\begin{equation}\label{Cauchy problem}
\begin{cases*}
(\prescript{C}{}D^{\alpha}_{0_+}y)(r)-\sum\limits_{i=1}^{n}\mu_{i}(\prescript{C}{}D^{\alpha_i}_{0_+}y)(r)=g(r),\quad r>0 \\
y^{(k)}(0)=a_{k}\in \mathbb{R},\quad k=0, \cdots, l-1, \quad l-1 < \alpha \leq l,\quad l\in \mathbb{N},
\end{cases*}
\end{equation}
has a unique solution of the form
 \begin{equation*}
y(r)=y_{par}(r) + \sum_{k=0}^{l-1}a_k u_k(r), \quad r \geq 0,
\end{equation*}
here
\begin{equation*}
x_{par}(r)=\int_{0}^{r}s^{\alpha-1}E_{(\cdot),\alpha}(s)g(r-s)\mathrm{d}s,
\end{equation*}
is a particular solution of the IVP \eqref{Cauchy problem} with homogeneous initial condition, and the functions 
\begin{equation*}
u_k(r)=\frac{r^{k}}{k!}+\sum_{i=m_k+1}^{n}\mu_{i}r^{k+\alpha-\alpha_i}E_{(\cdot), k+1+\alpha-\alpha_{i}}(r), \quad k=0, \cdots, l-1,
\end{equation*}	
satisfying the following initial conditions
\begin{equation*}
u_{k}^{(m)}(0)=\delta_{km}=\begin{cases}
1, \quad k=m,\\ 0, \quad k \neq m, \quad \text{where} \quad k,m=0, \ldots, l-1.
\end{cases}
\end{equation*}
The function
\begin{equation}\label{Emulti}
E_{(\cdot),\beta}(r)=E_{(\alpha-\alpha_1,...,\alpha-\alpha_n),\beta}(\mu_1r^{\alpha-\alpha_1},...,\mu_n r^{\alpha-\alpha_n})
\end{equation}
is a particular case of the multivariate M--L function \eqref{multidimensional} and $m_k \in \mathbb{N}$ for $k=0, \ldots, l-1$ are defined for the condition 
\begin{equation*}
\begin{cases*}
l_{m_k} \geq k+1, \\
l_{m_{k+1}} \leq k.
\end{cases*}
\end{equation*}
In the particular case $l_i \leq k,i=0,...,l-1,$ we form $m_k\coloneqq 0$, and if $l_i \geq k+1, i=0,...,l-1$,then $m_k \coloneqq n$.
\end{thm}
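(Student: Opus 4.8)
The plan is to solve the IVP \eqref{Cauchy problem} by the Laplace transform method: transform the multi-term operator into an algebraic equation in the image domain, solve for the transform of $y$, and then recognize the inverse transforms as the multivariate M--L functions \eqref{Emulti}. Writing $\tilde y(\sigma)=\mathcal{L}\{y\}(\sigma)$ and applying the Laplace rule for the Caputo derivative, $\mathcal{L}\{\prescript{C}{}D^{\beta}_{0^+}y\}(\sigma)=\sigma^{\beta}\tilde y(\sigma)-\sum_{k=0}^{\lceil\beta\rceil-1}\sigma^{\beta-1-k}y^{(k)}(0)$, to each order $\beta\in\{\alpha,\alpha_1,\dots,\alpha_n\}$, I would transform the equation and solve for $\tilde y(\sigma)$, obtaining
\begin{equation*}
\tilde y(\sigma)=\frac{\tilde g(\sigma)}{P(\sigma)}+\frac{1}{P(\sigma)}\Big(\sum_{k=0}^{l-1}a_k\sigma^{\alpha-1-k}-\sum_{i=1}^{n}\mu_i\sum_{k=0}^{l_i-1}a_k\sigma^{\alpha_i-1-k}\Big),
\end{equation*}
where $P(\sigma)=\sigma^{\alpha}-\sum_{i=1}^{n}\mu_i\sigma^{\alpha_i}$ is the symbol of the operator. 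The first summand will produce the particular solution $y_{par}$ and the second the homogeneous contribution $\sum_k a_k u_k$.

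The key computation is the inversion of the scalar transfer function $1/P(\sigma)$. I would factor out $\sigma^{-\alpha}$ and expand the resulting factor $(1-\sum_i\mu_i\sigma^{\alpha_i-\alpha})^{-1}$ as a Neumann series, valid for $|\sigma|$ large; the multinomial theorem then yields
\begin{equation*}
\frac{1}{P(\sigma)}=\sum_{k=0}^{\infty}\;\sum_{l_1+\dots+l_n=k}(k;l_1,\dots,l_n)\prod_{i=1}^{n}\mu_i^{l_i}\,\sigma^{-\alpha-\sum_{i}(\alpha-\alpha_i)l_i}.
\end{equation*}
Inverting term by term with the elementary pair $\mathcal{L}^{-1}\{\sigma^{-s}\}(r)=r^{s-1}/\Gamma(s)$ and comparing with \eqref{multidimensional}--\eqref{Emulti}, I would identify $\mathcal{L}^{-1}\{1/P(\sigma)\}(r)=r^{\alpha-1}E_{(\cdot),\alpha}(r)$, and more generally $\mathcal{L}^{-1}\{\sigma^{\alpha-\beta}/P(\sigma)\}(r)=r^{\beta-1}E_{(\cdot),\beta}(r)$. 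The convolution theorem applied to $\tilde g(\sigma)/P(\sigma)$ then gives exactly $y_{par}(r)=\int_0^r s^{\alpha-1}E_{(\cdot),\alpha}(s)\,g(r-s)\,\mathrm{d}s$.

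For the homogeneous part I would collect the coefficient of each $a_k$, obtaining $\tilde u_k(\sigma)=\sigma^{\alpha-1-k}/P(\sigma)-\sum_{i:\,l_i\geq k+1}\mu_i\sigma^{\alpha_i-1-k}/P(\sigma)$, where the restriction $l_i\geq k+1$ records which boundary terms of the $\alpha_i$-derivative actually carry $a_k$. The inversion formula above gives $u_k(r)=r^{k}E_{(\cdot),k+1}(r)-\sum_{i:\,l_i\geq k+1}\mu_i r^{k+\alpha-\alpha_i}E_{(\cdot),k+1+\alpha-\alpha_i}(r)$. To match the stated closed form I would establish the functional relation
\begin{equation*}
r^{k}E_{(\cdot),k+1}(r)=\frac{r^{k}}{k!}+\sum_{i=1}^{n}\mu_i r^{k+\alpha-\alpha_i}E_{(\cdot),k+1+\alpha-\alpha_i}(r),
\end{equation*}
which follows at once in the image domain from $\sigma^{-1-k}P(\sigma)=\sigma^{\alpha-1-k}-\sum_i\mu_i\sigma^{\alpha_i-1-k}$. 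Substituting this relation cancels the terms with $l_i\geq k+1$ and leaves precisely the sum over $\{i:\,l_i\leq k\}=\{m_k+1,\dots,n\}$, which is the asserted expression for $u_k$.

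Finally, I would verify the interpolation conditions $u_k^{(m)}(0)=\delta_{km}$ and that the constructed $y$ solves \eqref{Cauchy problem}. Since each $\alpha-\alpha_i>0$, every M--L term in $u_k$ carries a factor $r^{k+\alpha-\alpha_i}$ that vanishes, together with sufficiently many of its derivatives, at the origin, so term-by-term differentiation at $r=0$ retains only $r^{k}/k!$ and yields $u_k^{(m)}(0)=\delta_{km}$ for $0\leq m\leq l-1$. I expect the two main technical obstacles to be: first, justifying the term-by-term inversion, which requires locally uniform convergence of the multiple series defining $E_{(\cdot),\beta}$ — here the Fox--Wright convergence estimate \eqref{fox} together with the growth of $\Gamma$ in the denominator can be invoked; and second, the index bookkeeping that converts the analytic restriction $l_i\geq k+1$ into the combinatorial cut-off $m_k$, which rests on the monotonicity $l_i=\lceil\alpha_i\rceil$ forced by $\alpha_1>\cdots>\alpha_n$. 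Uniqueness then follows from the injectivity of the Laplace transform on the admissible function class.
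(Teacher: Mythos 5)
The paper gives no proof of this theorem: it is quoted from Luchko and Gorenflo, and the paper explicitly records that their proof proceeds by the method of \emph{operational calculus} (a Mikusi\'nski-type calculus for the Caputo derivative, working algebraically in a ring of convolution quotients), not by the Laplace transform. Your derivation is therefore a genuinely different route, though it is essentially the transform-domain shadow of the operational argument, and its substance checks out: the symbol $P(\sigma)=\sigma^{\alpha}-\sum_{i}\mu_{i}\sigma^{\alpha_{i}}$, the Neumann--multinomial expansion of $1/P(\sigma)$, the identification $\mathcal{L}^{-1}\{\sigma^{\alpha-\beta}/P(\sigma)\}(r)=r^{\beta-1}E_{(\cdot),\beta}(r)$ against \eqref{multidimensional}--\eqref{Emulti}, the image-domain identity $\sigma^{-1-k}P(\sigma)=\sigma^{\alpha-1-k}-\sum_{i}\mu_{i}\sigma^{\alpha_{i}-1-k}$ used to eliminate $r^{k}E_{(\cdot),k+1}(r)$, and the resulting cut-off $\{i:\,l_{i}\leq k\}=\{m_{k}+1,\dots,n\}$ (which indeed rests on $\alpha_{1}>\cdots>\alpha_{n}$ forcing $l_{1}\geq\cdots\geq l_{n}$) reproduce exactly $y_{par}$ and the stated $u_{k}$. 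What the operational route buys, and what your route must pay for separately, are the two points you only partly address: first, the Laplace method presupposes that $y$, its Caputo derivatives, and $g$ are transformable (say, of exponential order), so your appeal to injectivity of $\mathcal{L}$ yields uniqueness only within that class, whereas the operational calculus delivers existence and uniqueness in an explicitly described function space with no growth hypotheses, and the termwise inversion of an expansion valid only for large $|\sigma|$ must be legitimized a posteriori by the entire character of the multivariate M--L series, as you note. Second, a small but real imprecision in your verification of $u_{k}^{(m)}(0)=\delta_{km}$: for $k<m\leq l-1$ it does \emph{not} suffice that $\alpha-\alpha_{i}>0$; you need the surviving indices to satisfy $l_{i}\leq k$, whence $\alpha_{i}\leq k$ and $k+\alpha-\alpha_{i}\geq\alpha>l-1\geq m$, so the lowest power in each remaining M--L term exceeds $m$ --- this is precisely why the cancellation down to $i\geq m_{k}+1$ is essential, and fortunately your substitution step already supplies it, so the gap is one of justification rather than of substance. (For contrast, note that when the paper proves its own three-term analogue, Theorem \ref{unithm}, it avoids transforms altogether and verifies the series solution directly via Pascal's tetrahedron \eqref{pascal}; your method generalizes more mechanically to $n$ terms, while the direct verification avoids all transformability questions.)
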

In terms of numerical methods, Edwards et al. \cite{Edwards} and Diethelm et al. \cite{Kai-Luchko} have investigated the IVP for the general linear multi-term FDEs with constant coefficients. The authors in \cite{Kai-Luchko} have proposed a new algorithm for the numerical solution of the IVP \eqref{Cauchy problem}.

Furthermore, Bazhlekova \cite{Bazhlekova} have considered the following Caputo type fractional relaxation equations with multi-orders:
\begin{equation}
\begin{cases*}
(\prescript{C}{}D^{\alpha}_{0_+}y)(r)+\sum\limits_{j=1}^{l}\mu_{j}(\prescript{C}{}D^{\alpha_j}_{0_+}y)(r)+ \mu y(r)=g(r), r > 0,\\
y(0)=y_{0}\in \mathbb{R},   
\end{cases*}
\end{equation}
\text{where} $0 < \alpha_l < ... < \alpha_1 < \alpha \leq 1, \mu,\mu_j > 0, j=1,...,l, l\in \mathbb{N}_0$.
Applying Laplace transformation, the fundamental solutions of the IVP are studied in \cite{Bazhlekova}.

In the same vein as above articles, we propose the exact analytical representation of solutions of Cauchy problem for a FDE with three independent fractional orders by introducing a newly defined trivariate M--L function 
\begin{equation} \label{1.1}
\begin{cases}
(\prescript{C}{}D^{\alpha}_{0+}y)(r)-\lambda_{3}(\prescript{C}{}D^{\beta}_{0+}y)(r)-\lambda_{2}(\prescript{C}{}D^{\gamma}_{0+}y)(r)-\lambda_{1}y(r)= g(r), r>0, \\ y(0)=y_0 \in \mathbb{R},
\end{cases}
\end{equation}
where $(\prescript{C}{}D^{\alpha}_{0^{+}}y)(\cdot), (\prescript{C}{}D^{\beta}_{0^{+}}y)(\cdot)$ and $(\prescript{C}{}D^{\gamma}_{0^{+}}y)(\cdot) $ are the Caputo type fractional differentiation operators of orders $1 \geq \alpha>\beta>\gamma >0 $ , $\lambda_i \in \mathbb{R},i=1,2,3$ denote constants and  $g \in  C([0,T], \mathbb{R})$.

In terms of Laplace integral transform method, Kilbas et al. \cite{Kilbas} have considered the Cauchy problem for \eqref{1.1} by using generalized Wright functions, in both homogeneous and inhomogeneous cases. It is necessary to note that our results by means of new trivariate M--L functions coincide with the results by means of Fox-Wright functions in \cite{Kilbas}.

The structure of this paper contains important improvement in the theory of special functions and multi-term FDEs and is outlined as below.
In Sect.\ref{sec:trivariate}, we establish a new trivariate Mittag-Leffler
function as a natural extension of classical M--L functions. We establish different properties of these function, also complex integral representation and Laplace integral transform and appropriate convolution results. \\
In Sect.\ref{sec:3}, firstly we consider $n$-th order derivative and integration. Then we investigate fractional order integral and derivatives of a newly defined M--L type function in Rieamann-Liouville and Caputo senses. Sect.\ref{sec:multi} is devoted to presenting the exact analytical solution by means of triple infinite series to the homogeneous linear multi-term FDE. Furthermore, we describe the exact solutions of the inhomogeneous linear FDE via a method of variation of constants formula via classical ideas. Sect.\ref{sec:ex} is related to illustrating an example to guarantee an ability of the given  analytical solutions of \eqref{1.1}. In Sect.\ref{sec:concl}, we discuss our main contributions of this paper and future research work.
 
\section{The new trivariate Mittag--Leffler function} \label{sec:trivariate}
\subsection{Introducing the definition}
\begin{defn} \label{def2}
	Let $\alpha, \beta, \gamma, \delta, \eta \in \mathbb{C}$ with $\Re(\alpha)>0$, $\Re(\beta)>0$, and $\Re(\gamma)>0$. We propose the following trivariate Mittag--Leffler (M--L) function:
	\begin{equation}\label{tML4}
	E_{\alpha,\beta,\gamma, \delta}^{\eta}(u,v,w)=\sum_{l=0}^{\infty}\sum_{p=0}^{\infty}\sum_{k=0}^{\infty}\frac{(\eta)_{l+p+k}}{\Gamma(l\alpha+p\beta+k\gamma+\delta)}\frac{u^lv^pw^k}{l!p!k!},\quad u,v,w \in \mathbb{C},
	\end{equation}
	where the numerator is a Pochhammer symbol which satisfies the following identity \cite{whittaker-watson}: 
	\begin{equation}\label{pochhammer}
	(\eta)_{l+p+k}=(\eta)_{l+p}(\eta+l+p)_{k}=(\eta)_{l}(\eta+l)_{p}(\eta+l+p)_{k}.
	\end{equation}
\end{defn}

In the special cases, whenever $w=0$ and $v=w=0$, trivariate Mittag-Leffler function \eqref{tML4}  reduces to  bivariate Miitag-Leffler \eqref{bML4} and three-parameter Mittag-Leffler (Prabhakar's) \eqref{Prabf} functions, respectively.

When we substitute $u=\lambda_{1}r^{\alpha}$, $v=\lambda_{2}r^{\beta}$, and $w=\lambda_{3}r^{\gamma}$ in \eqref{tML4}, then we can deduce the new trivariate Mittag-Leffler type function as follows:
\begin{equation}\label{trivtype}
r^{\delta-1}E_{\alpha,\beta,\gamma, \delta}^{\eta}(\lambda_{1}r^{\alpha},\lambda_{2}r^{\beta}, \lambda_{3}r^{\gamma})=\sum_{l=0}^{\infty}\sum_{p=0}^{\infty}\sum_{k=0}^{\infty}\frac{(\eta)_{l+p+k}}{\Gamma(l\alpha+p\beta+k\gamma+\delta)}\frac{\lambda_{1}^l\lambda_{2}^p\lambda_{3}^k}{l!p!k!}r^{l\alpha+p\beta+k\gamma+\delta-1}.
\end{equation}
It should be pointed out that this series in \eqref{tML4} converges absolutely and locally uniformly for $\alpha,\beta,\gamma \in \mathbb{C}$ with $\Re(\alpha)>0$, $\Re(\beta)>0$, and $\Re(\gamma)>0$. It can be easily proved by making use of the technique for convergence of the generalized Lauricella series in three variables which is proposed by Srivastava and Daoust \cite{ Srivastava-Daoust, Srivastava}.
\begin{lem}\label{lem:texp}
	If whole parameters are equal to $1$, then we get the triple exponential function:
	\begin{equation*}
	E_{1,1,1,1}^{1}(u,v,w)=\exp(u)\exp(v)\exp(w)=\exp(u+v+w), \quad u,v,w \in \mathbb{C}.
	\end{equation*}
\end{lem}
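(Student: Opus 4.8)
The plan is to substitute all five parameters equal to $1$ into the defining series \eqref{tML4} and then exploit an exact cancellation. Setting $\alpha=\beta=\gamma=\delta=\eta=1$, the general term of the triple series becomes
\begin{equation*}
\frac{(1)_{l+p+k}}{\Gamma(l+p+k+1)}\frac{u^lv^pw^k}{l!\,p!\,k!}.
\end{equation*}
The first observation I would record is that the Pochhammer symbol with base $1$ is a factorial: $(1)_{n}=\Gamma(1+n)=n!$. Hence $(1)_{l+p+k}=\Gamma(l+p+k+1)$, and the quotient of the numerator by the denominator Gamma factor is identically $1$ for every triple $(l,p,k)$.

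With this cancellation the triple series collapses to
\begin{equation*}
E_{1,1,1,1}^{1}(u,v,w)=\sum_{l=0}^{\infty}\sum_{p=0}^{\infty}\sum_{k=0}^{\infty}\frac{u^l}{l!}\frac{v^p}{p!}\frac{w^k}{k!}.
\end{equation*}
Next I would invoke the absolute and locally uniform convergence of the series already recorded after Definition \ref{def2}, which legitimizes rearranging and factoring the triply indexed sum into the product of three independent one-dimensional sums,
\begin{equation*}
E_{1,1,1,1}^{1}(u,v,w)=\left(\sum_{l=0}^{\infty}\frac{u^l}{l!}\right)\left(\sum_{p=0}^{\infty}\frac{v^p}{p!}\right)\left(\sum_{k=0}^{\infty}\frac{w^k}{k!}\right).
\end{equation*}
Each factor is the Maclaurin series of the exponential, so the product equals $\exp(u)\exp(v)\exp(w)$, and the functional equation of the exponential yields $\exp(u+v+w)$, which is the claimed identity.

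Since the argument is purely a parameter substitution followed by a factorization of an absolutely convergent power series, there is no serious obstacle to overcome. The only step that warrants a word of justification is the interchange that factors the triple sum into a product of three series; this is precisely where the absolute convergence noted earlier enters, but it is a routine Fubini/Cauchy rearrangement for nonnegative terms rather than a genuine difficulty.
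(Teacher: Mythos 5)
Your proposal is correct and follows essentially the same route as the paper's proof: identify $(1)_{l+p+k}=\Gamma(l+p+k+1)$, cancel it against the Gamma factor in the denominator, and factor the resulting triple series into three exponential series. Your explicit appeal to absolute convergence to justify the factorization is a point the paper leaves implicit, but it is the same argument.
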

\begin{proof}
Applying Definition \ref{def2},
	\begin{align*}
	E_{1,1,1,1}^{1}(u,v,w)&=\sum_{l=0}^{\infty}\sum_{p=0}^{\infty}\sum_{k=0}^{\infty}\frac{(1)_{l+p+k}}{\Gamma(l+p+k+1)}\frac{u^lv^pw^k}{l!p!k!}\\
	&=\sum_{l=0}^{\infty}\sum_{p=0}^{\infty}\sum_{k=0}^{\infty}\frac{\Gamma(l+p+k+1)}{\Gamma(1)\Gamma(l+p+k+1)}\frac{u^lv^pw^k}{l!p!k!}\\
	&=\sum_{l=0}^{\infty}\frac{u^l}{l!}\sum_{p=0}^{\infty}\frac{v^p}{p!}\sum_{k=0}^{\infty}\frac{w^k}{k!}=\exp(u)\exp(v)\exp(w)=\exp(u+v+w).
	\end{align*}
\end{proof}
\begin{remark}
	Lemma \ref{lem:texp} is the natural extension of M--L functions with one or two variables that
	\begin{equation*}
	E_{1}(u)=\exp(u) \quad \text{and} \quad E_{1,1,1}^{1}(u,v)=\exp(u)\exp(v)=\exp(u+v).
	\end{equation*}
\end{remark}
For simplicity, we denote $E_{\alpha,\beta,\gamma, \delta}^{1}(\lambda_{1}r^{\alpha},\lambda_{2}r^{\beta},\lambda_{3}r^{\gamma})\coloneqq E_{\alpha,\beta,\gamma, \delta}(\lambda_{1}r^{\alpha},\lambda_{2}r^{\beta},\lambda_{3}r^{\gamma})$ in our results for this paper.

In Figure \ref{fig:plot1}, we present the bivariate M-L function when $\eta=1$ and $w=0$ in \eqref{tML4}.
\begin{figure}[H]
	\centering
	\begin{subfigure}[b]{0.36\linewidth}
		\includegraphics[width=\linewidth]{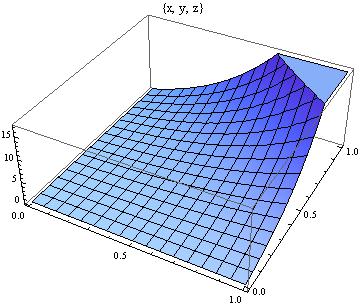}
		\caption{$\alpha=0.8$, $\beta=0.6$, $\delta=0.2$}
	\end{subfigure}
	\begin{subfigure}[b]{0.36\linewidth}
		\includegraphics[width=\linewidth]{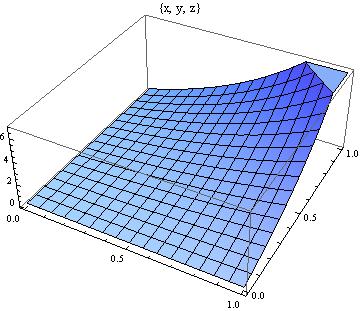}
		\caption{$\alpha=0.9$, $\beta=0.8$, $\delta=0.6$}
	\end{subfigure}
\caption{The plots of the bivariate M-L function of
$E_{\alpha,\beta,\delta}(u,v)$ with $\eta=1$ and varying values of $\alpha,\beta, \delta$}
\label{fig:plot1}
\end{figure}
\vspace{0.5cm}
In Figure \ref{fig:plot2}, we present the three-parameter M-L function when $\eta=1$ and $v=w=0$ in \eqref{tML4}.
\begin{figure}[H]
	\centering
	\begin{subfigure}[b]{0.36\linewidth}
		\includegraphics[width=\linewidth]{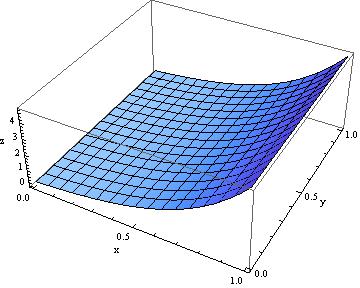}
		\caption{$\alpha=0.6$, $\delta=0.1$}
	\end{subfigure}
	\begin{subfigure}[b]{0.36\linewidth}
		\includegraphics[width=\linewidth]{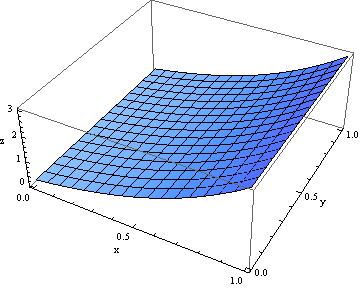}
		\caption{$\alpha=0.9$, $\delta=0.3$}
	\end{subfigure}
	\caption{The plots of the three-parameter M-L function of
		$E_{\alpha,\delta}(u)$ with $\eta=1$ and varying values of $\alpha, \delta$}
	\label{fig:plot2}
\end{figure}
\vspace{0.5cm}
In Figure \ref{fig:plot}, we describe univariate version of the trivariate M--L function \eqref{trivtype} with $\delta=\eta=1$ and different parameters $\alpha,\beta, \gamma$.
\begin{figure}[H]
	\centering
	\begin{subfigure}[b]{0.36\linewidth}
		\includegraphics[width=\linewidth]{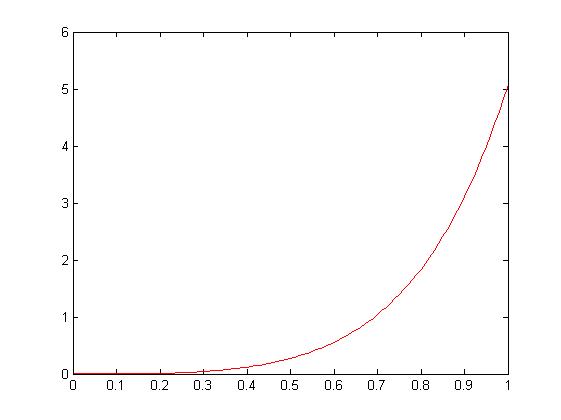}
		\caption{$E_{1,1,1,1}^{1}(r,r,r)=\exp(3r)$}
	\end{subfigure}
	\begin{subfigure}[b]{0.36\linewidth}
		\includegraphics[width=\linewidth]{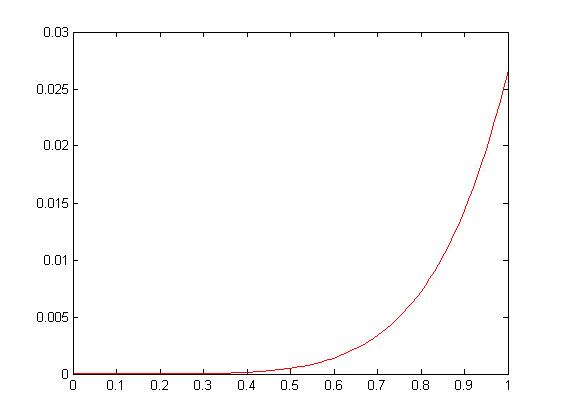}
		\caption{$\alpha=1.5$, $\beta=1.5$, $\gamma=2.5$}
	\end{subfigure}

	\begin{subfigure}[b]{0.36\linewidth}
		\includegraphics[width=\linewidth]{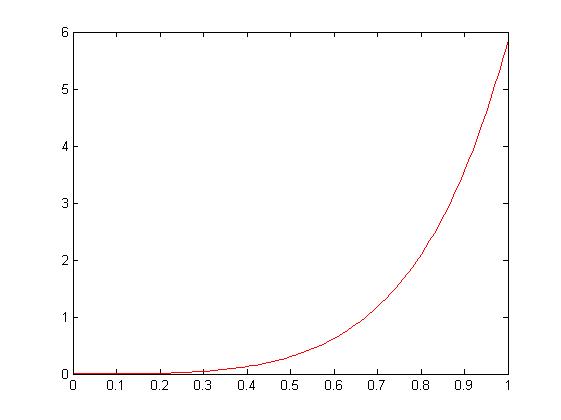}
		\caption{$\alpha=0.75$, $\beta=1$, $\gamma=1.25$}
	\end{subfigure}
	\begin{subfigure}[b]{0.36\linewidth}
		\includegraphics[width=\linewidth]{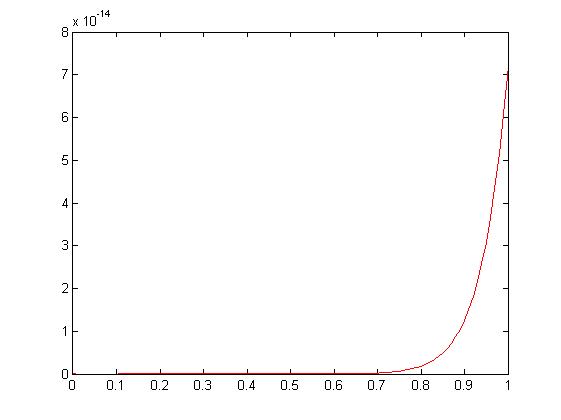}
		\caption{$\alpha=8$, $\beta=6$, $\gamma=4$}
	\end{subfigure}
	\caption{The plots of the univariate version of $E_{\alpha,\beta,\gamma,1}^{1}(r^{\alpha},r^{\beta},r^{\gamma})$ with $\delta=\eta=1$ and varying $\alpha,\beta, \gamma$}
	\label{fig:plot}
\end{figure}
\vspace{0.5cm}

The following Figure \ref{fig:fig2} shows the comparison of new trivariate, bivariate and M--L functions with two and three parameters.
\begin{figure}[H]
	\centering
	\includegraphics[width=0.6\linewidth]{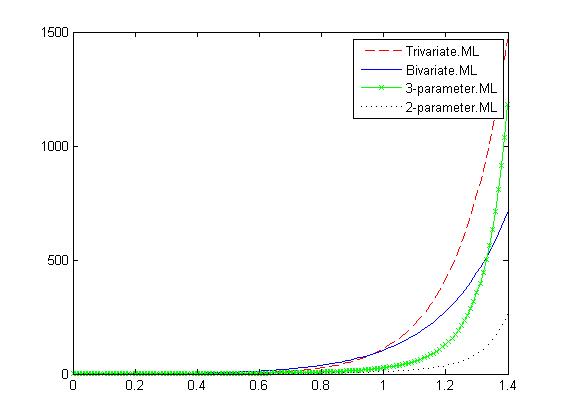}
	\caption{The comparison of various types of M--L functions}
	\label{fig:fig2}
\end{figure}
\vspace{0.5cm}
Here we provide the values of each parameter through the following table which are used in Figure \ref{fig:fig2} to compare final results for each functions.
\begin{table}[H]
	\centering
	\begin{tabular}{c c c c c} 
		\hline 
		$ $ & $E^{\eta}_{\alpha,\beta,\gamma,\delta}$ & $E^{\eta}_{\alpha,\beta,\gamma}$ & $E^{\eta}_{\alpha,\beta}$ & $E^{\eta}_{\alpha,\beta}$ \\ [1ex] 
		\hline
     	$\alpha$ &	$0.25$ & $0.25$ & $0.25$ & $0.25$ \\[0.5ex]
     	\hline
    	$\beta$ &	$0.75$ & $0.75$ & $0.75$ & $0.75$ \\[0.5ex]
    	\hline
    	$\gamma$ &	$1.5$ & $1.5$ & $-$ & $-$ \\[0.5ex]
    	\hline
    	$\delta$ &	$1.5$ & $-$ & $-$ & $-$ \\[0.5ex]
    	\hline
    	$\eta$ &	$1$ & $1$ & $1.5$ & $1$ \\[0.5ex] 
    	\hline
	\end{tabular}
	\caption{The values of parameters for each functions}
\end{table}
\vspace{0.1cm}
\subsection{Main results and relationships} 
\begin{thm}\label{complex}
For  $\alpha, \beta, \gamma, \delta, \eta \in \mathbb{C}$ with 	$\Re(\alpha)>0$, $\Re(\beta)>0$, and $\Re(\gamma)>0$, the trivariate M--L function \eqref{tML4} has the following integral representation on the complex plane:
\begin{equation}
E_{\alpha,\beta,\gamma, \delta}^{\eta}(u,v,w)=\frac{1}{2\pi i}\int\limits_{H}\frac{e^{\tau}\tau^{-\delta}}{(1-u\tau^{-\alpha}-v\tau^{-\beta}-w\tau^{-\gamma})^{\eta}}\mathrm{d}\tau,
\end{equation}
where $H$ is the Hankel contour.
\end{thm}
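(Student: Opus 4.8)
The plan is to expand the reciprocal Gamma factor in the series \eqref{tML4} by means of Hankel's classical representation of $1/\Gamma$ and then resum the resulting triple series with the generalized binomial theorem. Recall that for every $z\in\mathbb{C}$ one has
\[
\frac{1}{\Gamma(z)}=\frac{1}{2\pi i}\int_H e^{\tau}\tau^{-z}\,\mathrm{d}\tau,
\]
where $H$ is the Hankel contour encircling the branch cut along the negative real axis. Crucially, because the integrand $e^{\tau}\tau^{-z}$ is holomorphic in the cut plane, the value of this integral is independent of the radius of the circular arc of $H$, so I am free to fix that radius conveniently at a later stage.

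First I would substitute $z=l\alpha+p\beta+k\gamma+\delta$ into this formula term by term in \eqref{tML4}. Writing $\tau^{-(l\alpha+p\beta+k\gamma+\delta)}=\tau^{-\delta}\,\tau^{-l\alpha}\tau^{-p\beta}\tau^{-k\gamma}$ and absorbing the powers of $\tau$ into the monomials $u^lv^pw^k$, each summand becomes
\[
\frac{1}{2\pi i}\int_H e^{\tau}\tau^{-\delta}\,(\eta)_{l+p+k}\frac{(u\tau^{-\alpha})^l(v\tau^{-\beta})^p(w\tau^{-\gamma})^k}{l!\,p!\,k!}\,\mathrm{d}\tau.
\]
Summing over $l,p,k$ and interchanging summation with integration, the factor $e^{\tau}\tau^{-\delta}$ pulls out of the triple sum, leaving the series $\sum_{l,p,k}(\eta)_{l+p+k}\tfrac{x^ly^pz^k}{l!\,p!\,k!}$ with $x=u\tau^{-\alpha}$, $y=v\tau^{-\beta}$, $z=w\tau^{-\gamma}$. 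Grouping the terms according to $n=l+p+k$, invoking the Pochhammer identity \eqref{pochhammer}, and applying the multinomial theorem gives
\[
\sum_{l,p,k=0}^{\infty}(\eta)_{l+p+k}\frac{x^ly^pz^k}{l!\,p!\,k!}=\sum_{n=0}^{\infty}\frac{(\eta)_n}{n!}(x+y+z)^n=(1-x-y-z)^{-\eta},
\]
where the last equality is the binomial series $(1-t)^{-\eta}=\sum_{n}\tfrac{(\eta)_n}{n!}t^n$. Substituting back $x+y+z=u\tau^{-\alpha}+v\tau^{-\beta}+w\tau^{-\gamma}$ then reproduces exactly the claimed integrand, completing the identification.

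The main obstacle is the legitimacy of exchanging the sum and the integral, since the binomial series $(1-t)^{-\eta}$ converges only for $|t|<1$, while a generic Hankel contour passes arbitrarily close to the origin, where $\bigl|u\tau^{-\alpha}+v\tau^{-\beta}+w\tau^{-\gamma}\bigr|$ is unbounded. To handle this I would fix the contour $H$ so that the radius $R$ of its circular arc is large enough that
\[
\bigl|u\tau^{-\alpha}+v\tau^{-\beta}+w\tau^{-\gamma}\bigr|\le\theta<1\qquad\text{for all }\tau\in H,
\]
which is possible because $\Re(\alpha),\Re(\beta),\Re(\gamma)>0$ forces $|\tau^{-\alpha}|,|\tau^{-\beta}|,|\tau^{-\gamma}|\to0$ as $|\tau|\to\infty$ along the contour. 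On such a contour $|\tau|\ge R$ everywhere, so the trivariate series is dominated by a convergent geometric-type series uniformly in $\tau$; together with the exponential decay of $e^{\tau}$ along the rays (which secures integrability in $\tau$), the Weierstrass $M$-test and dominated convergence justify the interchange. Since the reciprocal-Gamma formula is insensitive to $R$, this enlargement of the contour costs nothing, and the representation holds as stated.
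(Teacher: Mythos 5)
Your proposal is correct and follows the same skeleton as the paper's proof: insert Hankel's representation of $1/\Gamma$ term by term, pull out $e^{\tau}\tau^{-\delta}$, interchange sum and integral, and resum the triple series to the binomial $(1-u\tau^{-\alpha}-v\tau^{-\beta}-w\tau^{-\gamma})^{-\eta}$. You differ in two worthwhile ways. First, your resummation groups by total degree $n=l+p+k$ and applies the multinomial theorem in one stroke, whereas the paper sums iteratively using the Pochhammer splitting $(\eta)_{l+p+k}=(\eta)_{l}(\eta+l)_{p}(\eta+l+p)_{k}$, collapsing first the $(p,k)$-sum to $\bigl(1-v\tau^{-\beta}-w\tau^{-\gamma}\bigr)^{-\eta-l}$ and then the $l$-sum; your version is shorter and manifestly symmetric in the three variables, while the paper's nested route makes the intermediate binomial structure explicit. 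Second, you supply what the paper omits entirely: a justification of the sum--integral interchange. The paper interchanges formally, and its nested sums implicitly need conditions like $\bigl|v\tau^{-\beta}+w\tau^{-\gamma}\bigr|<1$ that fail near the origin of a small Hankel loop; your normalization of the contour to arc radius $R$ large enough that the arguments stay in a disk of radius $\theta<1$, combined with the exponential decay of $e^{\tau}$ along the rays, genuinely closes this gap (the paper's Laplace-transform proof invokes analytic continuation for the analogous issue, but the complex-integral proof does not). One cosmetic point: for the Weierstrass $M$-test you should dominate by the sum of moduli, arranging $\bigl|u\tau^{-\alpha}\bigr|+\bigl|v\tau^{-\beta}\bigr|+\bigl|w\tau^{-\gamma}\bigr|\le\theta<1$ on $H$ (so that $\sum_{l,p,k}\bigl|(\eta)_{l+p+k}\bigr|\frac{|x|^{l}|y|^{p}|z|^{k}}{l!\,p!\,k!}\le\sum_{n}\frac{(|\eta|)_{n}}{n!}\theta^{n}<\infty$), rather than the modulus of the sum as you wrote; since $\Re(\alpha),\Re(\beta),\Re(\gamma)>0$, the same large-$R$ choice delivers this stronger bound, so nothing in your argument breaks.
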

\begin{proof}
Using up the well-known Hankel formula for reciprocal of the gamma function \cite{whittaker-watson}:
	\begin{equation*}
	\frac{1}{\Gamma(s)}=\frac{1}{2\pi i}\int\limits_{H} \tau^{-s}e^{\tau}\mathrm{d}\tau, \quad s \in \mathbb{C}.
	\end{equation*}
	
	Thus for $E_{\alpha,\beta,\gamma, \delta}^{\eta}(u,v,w)$, we have
	\begin{align*}
	E_{\alpha,\beta,\gamma, \delta}^{\eta}(u,v,w)&=\sum_{l=0}^{\infty}\sum_{p=0}^{\infty}\sum_{k=0}^{\infty}\frac{(\eta)_{l+p+k}}{\Gamma(l\alpha+p\beta+k\gamma+\delta)}\frac{u^lv^pw^k}{l!p!k!}\\
	&=\frac{1}{2\pi i}\sum_{l=0}^{\infty}\sum_{p=0}^{\infty}\sum_{k=0}^{\infty}\frac{(\eta)_{l+p+k}u^lv^pw^k}{l!p!k!}\int\limits_{H} \tau^{-l\alpha-p\beta-k\gamma-\delta}e^{\tau}\mathrm{d}\tau\\
	&=\frac{1}{2\pi i}\int\limits_{H} e^{\tau}\tau^{-\delta}\sum_{l=0}^{\infty}\sum_{p=0}^{\infty}\sum_{k=0}^{\infty}\frac{(\eta)_{l+p+k}}{l!p!k!}\left( \frac{u}{\tau^{\alpha}}\right) ^{l}\left( \frac{v}{\tau^{\beta}}\right)^{p}\left( \frac{w}{\tau^{\gamma}}\right)^{k}\mathrm{d}\tau.
	\end{align*}
	
	We evaluate the triple sum in this integral above by using  \eqref{pochhammer} and the following identity
	\begin{equation}\label{sum}
	\sum_{m=0}^{\infty}\frac{(\rho)_m}{m!}s^{m}=(1-s)^{-\rho}, \rho \in \mathbb{C}
	\end{equation}
	as  follows:
	\allowdisplaybreaks
   \begin{align*}
    &\sum_{l=0}^{\infty}\sum_{p=0}^{\infty}\sum_{k=0}^{\infty}\frac{(\eta)_{l+p+k}}{l!p!k!}\left( \frac{u}{\tau^{\alpha}}\right)^{l}\left( \frac{v}{\tau^{\beta}}\right)^{p}\left( \frac{w}{\tau^{\gamma}}\right)^{k}\\
    =&\sum_{l=0}^{\infty}\sum_{p=0}^{\infty}\sum_{k=0}^{\infty}\frac{(\eta)_{l}(\eta+l)_{p}(\eta+l+p)_{k}}{l!p!k!}\left( \frac{u}{\tau^{\alpha}}\right) ^{l}\left(\frac{v}{\tau^{\beta}}\right)^{p}\left( \frac{w}{\tau^{\gamma}}\right)^{k}\\
	=&\sum_{l=0}^{\infty}\frac{(\eta)_{l}}{l!}\left( \frac{u}{\tau^{\alpha}}\right) ^{l}\sum_{p=0}^{\infty}\sum_{k=0}^{\infty}\frac{(\eta+l+p)_{k}(\eta+l)_{p}}{p!k!}\left( \frac{v}{\tau^{\beta}}\right)^{p}\left( \frac{w}{\tau^{\gamma}}\right)^{k}\\
	=&\sum_{l=0}^{\infty}\frac{(\eta)_{l}}{l!}\left( \frac{u}{\tau^{\alpha}}\right) ^{l}\sum_{p=0}^{\infty}\sum_{k=0}^{\infty}\frac{(\eta+l)_{p+k}}{p!k!}\left( \frac{v}{\tau^{\beta}}\right)^{p}\left( \frac{w}{\tau^{\gamma}}\right)^{k}\\
	=&\sum_{l=0}^{\infty}\frac{(\eta)_{l}}{l!}\left( \frac{u}{\tau^{\alpha}}\right)^{l} \Big(1-\frac{v}{\tau^{\beta}}-\frac{w}{\tau^{\gamma}}\Big)^{-\eta-l}\\
	=&\Big(1-\frac{v}{\tau^{\beta}}-\frac{w}{\tau^{\gamma}}\Big)^{-\eta}\sum_{l=0}^{\infty}\frac{(\eta)_{l}}{l!}\Big(\frac{u}{\tau^{\alpha}(1-\frac{v}{\tau^{\beta}}-\frac{w}{\tau^{\gamma}})}\Big)^{l}\\
	=&\Big(1-\frac{v}{\tau^{\beta}}-\frac{w}{\tau^{\gamma}}\Big)^{-\eta}\Big(1-\frac{u}{\tau^{\alpha}(1-\frac{v}{\tau^{\beta}}-\frac{w}{\tau^{\gamma}})}\Big)^{-\eta}\\
	=&\Big(1-\frac{u}{\tau^{\alpha}}-\frac{v}{\tau^{\beta}}-\frac{w}{\tau^{\gamma}}\Big)^{-\eta}=\frac{1}{(1-\frac{u}{\tau^{\alpha}}-\frac{v}{\tau^{\beta}}-\frac{w}{\tau^{\gamma}})^{\eta}}.
	\end{align*}
Plugging this to the integral formula for $E_{\alpha,\beta,\gamma, \delta}^{\eta}(u,v,w)$ attained above, we get the desired result.
\end{proof}

\begin{coroll}
	Let $\alpha, \beta, \gamma, \delta, \eta \in \mathbb{C}$, with $\Re(\alpha)>0$, $\Re(\beta)>0$, and $\Re(\gamma)>0$, $\lambda_{1},\lambda_{2}, \lambda_{3} \in \mathbb{C}$  and $r \in \mathbb{C}$. The complex integral representation for the univariate version \eqref{trivtype} is defined by:
	\begin{equation*}
	r^{\delta -1}E_{\alpha,\beta,\gamma, \delta}^{\eta}(\lambda_{1}r^{\alpha},\lambda_{2}r^{\beta},\lambda_{3}r^{\gamma})=\frac{1}{2\pi i} \int\limits_{H}\frac{e^{rs}s^{-\delta}}{(1-\lambda_{1}s^{-\alpha}-\lambda_{2}s^{-\beta}-\lambda_{3}s^{-\gamma})^{\eta}}\mathrm{d}s.
	\end{equation*}
\end{coroll}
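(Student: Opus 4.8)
The plan is to derive this corollary directly from Theorem~\ref{complex} by specializing the three arguments and then performing a single linear change of the integration variable; no new analytic machinery beyond the Hankel representation already proved should be needed. First I would insert $u=\lambda_{1}r^{\alpha}$, $v=\lambda_{2}r^{\beta}$, and $w=\lambda_{3}r^{\gamma}$ into
\[
E_{\alpha,\beta,\gamma, \delta}^{\eta}(u,v,w)=\frac{1}{2\pi i}\int\limits_{H}\frac{e^{\tau}\tau^{-\delta}}{(1-u\tau^{-\alpha}-v\tau^{-\beta}-w\tau^{-\gamma})^{\eta}}\,\mathrm{d}\tau,
\]
so that the bracketed factor becomes $\bigl(1-\lambda_{1}r^{\alpha}\tau^{-\alpha}-\lambda_{2}r^{\beta}\tau^{-\beta}-\lambda_{3}r^{\gamma}\tau^{-\gamma}\bigr)^{\eta}$, leaving $r$ present in three algebraic factors and in nothing else.

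Next I would substitute $\tau=rs$, so that $\mathrm{d}\tau=r\,\mathrm{d}s$, $e^{\tau}=e^{rs}$ and $\tau^{-\delta}=r^{-\delta}s^{-\delta}$. The decisive simplification is that each mixed product collapses, since $r^{\alpha}\tau^{-\alpha}=(r/\tau)^{\alpha}=s^{-\alpha}$, and likewise $r^{\beta}\tau^{-\beta}=s^{-\beta}$ and $r^{\gamma}\tau^{-\gamma}=s^{-\gamma}$; hence the entire denominator loses its $r$-dependence and reduces to $\bigl(1-\lambda_{1}s^{-\alpha}-\lambda_{2}s^{-\beta}-\lambda_{3}s^{-\gamma}\bigr)^{\eta}$. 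Gathering the remaining prefactors gives $r^{-\delta}\cdot r=r^{1-\delta}$, so that after multiplying both sides by $r^{\delta-1}$ the powers of $r$ cancel exactly and one is left precisely with the asserted formula, with $s$ now the variable of integration.

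The only step that demands care is the fate of the contour under the map $\tau\mapsto s=\tau/r$: strictly speaking this sends the Hankel contour $H$ to the rescaled, rotated contour $r^{-1}H$ rather than to $H$ itself, and one must also fix branches so that $(rs)^{-\alpha}=r^{-\alpha}s^{-\alpha}$ holds consistently. For real positive $r$ this is merely a dilation that maps a Hankel contour to a Hankel contour, and nothing further is required. For complex $r$ I would invoke Cauchy's theorem to deform $r^{-1}H$ back to the standard contour $H$: once the branch of the multivalued factors is fixed, the integrand is single-valued and analytic off the branch cut, and the factor $e^{rs}$ together with the algebraic decay of the remaining terms ensures that the joining arcs at infinity contribute nothing, so that no singularity is crossed during the deformation. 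This contour-equivalence is the sole genuinely analytic point of the argument; everything else is the routine bookkeeping of the powers of $r$ recorded above.
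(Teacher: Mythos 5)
Your proposal is correct and follows essentially the same route as the paper: specialize the arguments in Theorem \ref{complex} to $u=\lambda_{1}r^{\alpha}$, $v=\lambda_{2}r^{\beta}$, $w=\lambda_{3}r^{\gamma}$, pull out the power $r^{\delta-1}$, and rescale via $s=\tau/r$. If anything, you are more careful than the paper, which performs the substitution without remarking that the image contour $r^{-1}H$ must be deformed back to $H$ and that branches must be fixed so $(rs)^{-\alpha}=r^{-\alpha}s^{-\alpha}$ — points your argument correctly identifies and resolves.
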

\begin{proof}
	Applying Theorem \ref{complex}, we make use of substitution $u=\lambda_{1}r^{\alpha}$, $v=\lambda_{2}r^{\beta}$ and $w=\lambda_{3}r^{\gamma}$, we obtain:
	\begin{equation*}
	E_{\alpha,\beta,\gamma, \delta}^{\eta}(\lambda_{1}r^{\alpha},\lambda_{2}r^{\beta},\lambda_{3}r^{\gamma})=\frac{1}{2\pi i}\int\limits_{H}\frac{e^{\tau}\tau^{-\delta}}{(1-\lambda_{1}\left( \frac{r}{\tau}\right) ^{\alpha}-\lambda_{2}\left( \frac{r}{\tau}\right)^{\beta}-\lambda_{3}\left( \frac{r}{\tau}\right)^{\gamma})^{\eta}}\mathrm{d}\tau,
	\end{equation*}
	and
	\begin{equation*}
	r^{\delta -1}E_{\alpha,\beta,\gamma, \delta}^{\eta}(\lambda_{1}r^{\alpha},\lambda_{2}r^{\beta},\lambda_{3}r^{\gamma})=\frac{1}{2\pi i}\int\limits_{H}\frac{e^{\tau}\left( \frac{r}{\tau}\right) ^{\delta}}{(1-\lambda_{1}\left( \frac{r}{\tau}\right) ^{\alpha}-\lambda_{2}\left( \frac{r}{\tau}\right)^{\beta}-\lambda_{3}\left( \frac{r}{\tau}\right)^{\gamma})^{\eta}}\frac{1}{r}\mathrm{d}\tau,
	\end{equation*}
thus, substitute $s=\frac{\tau}{r}$ to get the stated result:
	\begin{equation*}
	r^{\delta -1}E_{\alpha,\beta,\gamma, \delta}^{\eta}(\lambda_{1}r^{\alpha},\lambda_{2}r^{\beta},\lambda_{3}r^{\gamma})=\frac{1}{2\pi i}\int\limits_{H}\frac{e^{rs}s^{-\delta}}{(1-\lambda_{1}s^{-\alpha}-\lambda_{2}s^{-\beta}-\lambda_{3}s^{-\gamma})^{\eta}}\mathrm{d}s.
	\end{equation*}
	
\end{proof}
The next results concern the Laplace integral transform of univariate formula for trivariate M--L type function \eqref{trivtype}.

\begin{thm}\label{Lap}
For $\lambda_{i} \in \mathbb{C}, i =1,2,3$, $\alpha, \beta, \gamma, \delta, \eta \in \mathbb{C}$ with $\Re(\alpha)>0$, $\Re(\beta)>0$, $\Re(\gamma)>0$, and $\Re(\delta)>0$, the following holds:
\begin{equation*}
\mathcal{L}\left\lbrace r^{\delta-1}E_{\alpha,\beta,\gamma, \delta}^{\eta}(\lambda_{1}r^{\alpha},\lambda_{2}r^{\beta},\lambda_{3}r^{\gamma})\right\rbrace (s)=\frac{1}{s^{\delta}}\left( 1-\frac{\lambda_{1}}{s^{\alpha}}-\frac{\lambda_{2}}{s^{\beta}}-\frac{\lambda_{3}}{s^{\gamma}}\right)^{-\eta}, \quad \Re(s)>0.
\end{equation*}
\end{thm}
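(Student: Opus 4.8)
The plan is to compute the transform by integrating the defining triple series \eqref{trivtype} term by term, relying on the elementary Laplace identity $\mathcal{L}\{r^{c-1}\}(s)=\Gamma(c)/s^{c}$ valid for $\Re(c)>0$. First I would write, from \eqref{trivtype},
\begin{equation*}
r^{\delta-1}E_{\alpha,\beta,\gamma,\delta}^{\eta}(\lambda_{1}r^{\alpha},\lambda_{2}r^{\beta},\lambda_{3}r^{\gamma})=\sum_{l=0}^{\infty}\sum_{p=0}^{\infty}\sum_{k=0}^{\infty}\frac{(\eta)_{l+p+k}}{\Gamma(l\alpha+p\beta+k\gamma+\delta)}\frac{\lambda_{1}^{l}\lambda_{2}^{p}\lambda_{3}^{k}}{l!\,p!\,k!}\,r^{l\alpha+p\beta+k\gamma+\delta-1},
\end{equation*}
and then apply $\mathcal{L}$ to each monomial $r^{l\alpha+p\beta+k\gamma+\delta-1}$. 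Since $\Re(l\alpha+p\beta+k\gamma+\delta)>0$ under the stated hypotheses, its transform is $\Gamma(l\alpha+p\beta+k\gamma+\delta)/s^{\,l\alpha+p\beta+k\gamma+\delta}$. The decisive feature is that the gamma factor produced by the transform cancels exactly the gamma factor in the denominator of each coefficient, leaving
\begin{equation*}
\mathcal{L}\left\{r^{\delta-1}E_{\alpha,\beta,\gamma,\delta}^{\eta}(\lambda_{1}r^{\alpha},\lambda_{2}r^{\beta},\lambda_{3}r^{\gamma})\right\}(s)=\frac{1}{s^{\delta}}\sum_{l=0}^{\infty}\sum_{p=0}^{\infty}\sum_{k=0}^{\infty}\frac{(\eta)_{l+p+k}}{l!\,p!\,k!}\left(\frac{\lambda_{1}}{s^{\alpha}}\right)^{l}\left(\frac{\lambda_{2}}{s^{\beta}}\right)^{p}\left(\frac{\lambda_{3}}{s^{\gamma}}\right)^{k}.
\end{equation*}

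At this point I would invoke the triple-sum evaluation already carried out inside the proof of Theorem \ref{complex}: with the Pochhammer splitting \eqref{pochhammer} and the binomial identity \eqref{sum}, the inner sum over $k$ collapses via $(\eta+l)_{p+k}$ to a factor $\bigl(1-\lambda_{2}s^{-\beta}-\lambda_{3}s^{-\gamma}\bigr)^{-\eta-l}$, and the remaining sum over $l$ then gives $\bigl(1-\lambda_{1}s^{-\alpha}-\lambda_{2}s^{-\beta}-\lambda_{3}s^{-\gamma}\bigr)^{-\eta}$. Multiplying by the prefactor $s^{-\delta}$ yields precisely the asserted closed form. Indeed this is the same algebraic reduction as before, only with $u/\tau^{\alpha},\,v/\tau^{\beta},\,w/\tau^{\gamma}$ replaced by $\lambda_{1}/s^{\alpha},\,\lambda_{2}/s^{\beta},\,\lambda_{3}/s^{\gamma}$, so I would simply cite it rather than repeat the computation.

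The step requiring genuine care is the interchange of the Laplace integral with the triple summation. I would justify it by the absolute and locally uniform convergence of \eqref{trivtype} noted after that display (established via the Srivastava--Daoust Lauricella convergence criterion), together with the positivity of the kernel $e^{-sr}r^{\delta-1}$ for real $s$, so that Tonelli/Fubini applies to the majorized series on $(0,\infty)$. This legitimizes the termwise transform for $\Re(s)$ large enough that the resulting geometric-type triple series converges, namely when $\bigl|\lambda_{1}s^{-\alpha}+\lambda_{2}s^{-\beta}+\lambda_{3}s^{-\gamma}\bigr|<1$; the identity on the full half-plane $\Re(s)>0$ then follows by analytic continuation, both sides being analytic in $s$ there. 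I expect the convergence-region bookkeeping and the Fubini justification to be the only real obstacle, the remainder being the bookkeeping-free cancellation of gamma factors and a verbatim appeal to the sum already evaluated in Theorem \ref{complex}.
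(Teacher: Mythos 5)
Your proposal is correct and follows essentially the same route as the paper: termwise Laplace transformation of the series \eqref{trivtype} with the gamma factors cancelling, followed by the reduction of the resulting triple series via the Pochhammer splitting \eqref{pochhammer} and the binomial identity \eqref{sum} to the closed form $s^{-\delta}\bigl(1-\lambda_{1}s^{-\alpha}-\lambda_{2}s^{-\beta}-\lambda_{3}s^{-\gamma}\bigr)^{-\eta}$, with the auxiliary convergence conditions on $s$ removed by analytic continuation exactly as the paper does. The only cosmetic differences are that you cite the sum evaluation from Theorem \ref{complex} instead of repeating it (the paper re-derives it, summing over $k$, then $p$, then $l$) and that you spell out the Fubini/Tonelli justification a little more explicitly than the paper's one-line appeal to locally uniform convergence.
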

\begin{proof}
Since the triple series is locally and uniformly convergent, we can integrate it term by term. The Laplace integral transform of a power function is defined by
	\begin{equation*}
	\mathcal{L}\left\lbrace \frac{r^{l-1}}{\Gamma(l)}\right\rbrace (s)=\frac{1}{s^{l}}, \quad \Re(l)>-1.
	\end{equation*}
	Therefore, by using \eqref{sum} for the trivariate Mittag-Leffler type function we have:
	\allowdisplaybreaks
	\begin{align*}
	\mathcal{L}\left\lbrace r^{\delta-1}E_{\alpha,\beta,\gamma, \delta}^{\eta}(\lambda_{1}r^{\alpha},\lambda_{2}r^{\beta},\lambda_{3}r^{\gamma})\right\rbrace (s)&=\sum_{l=0}^{\infty}\sum_{p=0}^{\infty}\sum_{k=0}^{\infty}\frac{(\eta)_{l+p+k}}{\Gamma(l\alpha+p\beta+k\gamma+\delta)}\frac{\lambda_{1}^l\lambda_{2}^p\lambda_{3}^k}{l!p!k!} \mathcal{L}\left\lbrace r^{l\alpha+p\beta+k\gamma+\delta-1}\right\rbrace(s)\\
	&=\frac{1}{s^{\delta}}\sum_{l=0}^{\infty}\sum_{p=0}^{\infty}\sum_{k=0}^{\infty}\frac{(\eta)_{l+p+k}}{l!p!k!}\left(\frac{\lambda_{1}}{s^{\alpha}} \right)^{l}\left(\frac{\lambda_{2}}{s^{\beta}} \right)^{p}\left(\frac{\lambda_{3}}{s^{\gamma}} \right)^{k}\\
	&=\frac{1}{s^{\delta}}\sum_{l=0}^{\infty}\sum_{p=0}^{\infty}\frac{(\eta)_{l+p}}{l!p!}\left(\frac{\lambda_{1}}{s^{\alpha}} \right)^{l}\left(\frac{\lambda_{2}}{s^{\beta}} \right)^{p}\sum_{k=0}^{\infty}\frac{(\eta+l+p)_{k}}{k!}\left(\frac{\lambda_{3}}{s^{\gamma}} \right)^{k}\\
	&\hspace{-5.5cm}=\frac{1}{s^{\delta}}\left( 1-\frac{\lambda_{3}}{s^{\gamma}}\right)^{-\eta} \sum_{l=0}^{\infty}\frac{(\eta)_{l}}{l!}\left(\frac{\lambda_{1}}{s^{\alpha}} \right)^{l}\left(1- \frac{\lambda_{3}}{s^{\gamma}}\right)^{-l}\sum_{p=0}^{\infty}\frac{(\eta+l)_{p}}{p!}\left(\frac{\lambda_{2}}{s^{\beta}} \right)^{p} 
	\left( 1-\frac{\lambda_{3}}{s^{\gamma}}\right)^{-p}\\
	&\hspace{-5.5cm}=\frac{1}{s^{\delta}}\left( 1-\frac{\lambda_{3}}{s^{\gamma}}\right)^{-\eta}\left(1-\frac{\lambda_{2}}{s^{\beta}}\left(1-\frac{\lambda_{3}}{s^{\gamma}}\right)^{-1} \right)^{-\eta}\sum_{l=0}^{\infty}\frac{(\eta)_{l}}{l!}\left(\frac{\lambda_{1}}{s^{\alpha}}\right)^{l}\left(1- \frac{\lambda_{3}}{s^{\gamma}}\right)^{-l}\left(  1-\frac{\lambda_{2}}{s^{\beta}}\left(1- \frac{\lambda_{3}}{s^{\gamma}}\right)^{-1} \right)^{-l} \\
	&=\frac{1}{s^{\delta}}\left( 1-\frac{\lambda_{2}}{s^{\beta}}-\frac{\lambda_{3}}{s^{\gamma}}\right)^{-\eta}\sum_{l=0}^{\infty}\frac{(\eta)_{l}}{l!}\left( \frac{\lambda_{1}}{s^{\alpha}}\left( 1-\frac{\lambda_{2}}{s^{\beta}}-\frac{\lambda_{3}}{s^{\gamma}}\right)^{-1}\right)^{l}\\
	&=\frac{1}{s^{\delta}}\left( 1-\frac{\lambda_{2}}{s^{\beta}}-\frac{\lambda_{3}}{s^{\gamma}}\right)^{-\eta}\left( 1-\frac{\lambda_{1}}{s^{\alpha}}\left( 1-\frac{\lambda_{2}}{s^{\beta}}-\frac{\lambda_{3}}{s^{\gamma}}\right)^{-1}\right)^{-\eta}\\
	&=\frac{1}{s^{\delta}}\left( 1-\frac{\lambda_{1}}{s^{\alpha}}-\frac{\lambda_{2}}{s^{\beta}}-\frac{\lambda_{3}}{s^{\gamma}}\right)^{-\eta}.
	\end{align*}
Note that we have need of extra conditions on $s$:	 
\begin{equation*}
\left\lvert \frac{\lambda_{3}}{s^{\gamma}}\right \rvert <1, \quad \left \lvert\frac{\lambda_{2}}{s^{\beta}}\left( 1-\frac{\lambda_{3}}{s^{\gamma}}\right)^{-1}\right\rvert <1  \quad \text{and} \quad  \left \lvert\frac{\lambda_{1}}{s^{\alpha}}\left( 1-\frac{\lambda_{2}}{s^{\beta}}-\frac{\lambda_{3}}{s^{\gamma}}\right)^{-1}\right \rvert <1,
\end{equation*}
for proper convergence of the series. However, these conditions can be reduced according to the analytic continuation. Therefore, this gives the desired result for arbitrary $s \in \mathbb{C}$ whenever $\Re(s)>0$.
\end{proof}
Next we prove a result of convolution on trivariate Mittag-Leffler type functions which is related to above theorem directly.
%\begin{defn} \cite{Sneddon}
%The convolution of two functions $g(r)$ and $g(r)$, given on $[0,\infty)$, is defined for $t\geq 0$ by the integral:
%\begin{equation}
%(f\ast g)(r)=\int_{0}^{t}g(\tau) g(t-\tau)d\tau,
%\end{equation}
%\end{defn}
%\begin{thm} [\cite{Sneddon}]
%The Laplace transform of convolution of two functions $g(r)$ and $g(r)$, has the following property:
%\begin{equation}\label{convol}
%\mathscr{L}\left\lbrace f\ast g\right\rbrace(s)=\mathscr{L} \left\lbrace f\right\rbrace(s) \mathscr{L}\left\lbrace g\right\rbrace (s), \quad \text{for} \quad s\in\mathbb{C}.
%\end{equation}
%\end{thm}
\begin{thm}
Let $\lambda_{i} \in \mathbb{C}, i=1,2,3$, $\alpha, \beta, \gamma, \delta_{1}, \delta_{2},\eta_{1},\eta_{2} \in \mathbb{C}$ with $\Re(\alpha)>0$, $\Re(\beta)>0$, $\Re(\gamma)>0$ and $\Re(\delta_{j})>0, j=1,2$. Then the next result yields:
	\begin{align}\nonumber
	\left( r^{\delta_{1}-1} E_{\alpha,\beta,\gamma, \delta_1}^{\eta_1}(\lambda_{1}r^{\alpha},\lambda_{2}r^{\beta}, \lambda_{3}r^{\gamma})\right) &\ast \left( r^{\delta_{2}-1} E_{\alpha,\beta,\gamma, \delta_2}^{\eta_2}(\lambda_{1}r^{\alpha},\lambda_{2}r^{\beta}, \lambda_{3}r^{\gamma})\right)\\
	&=r^{\delta_{1}+\delta_{2}-1}E_{\alpha,\beta,\gamma, \delta_1+\delta_{2}}^{\eta_1+\eta_{2}}(\lambda_{1}r^{\alpha},\lambda_{2}r^{\beta}, \lambda_{3}r^{\gamma}).
	\end{align}
\end{thm}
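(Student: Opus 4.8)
The plan is to exploit the fact that the Laplace transform turns convolution into pointwise multiplication, thereby reducing the stated identity to the algebraic factorisation already recorded in Theorem \ref{Lap}. First I would abbreviate $f(r) = r^{\delta_1 - 1} E_{\alpha,\beta,\gamma,\delta_1}^{\eta_1}(\lambda_1 r^\alpha, \lambda_2 r^\beta, \lambda_3 r^\gamma)$ and $g(r) = r^{\delta_2 - 1} E_{\alpha,\beta,\gamma,\delta_2}^{\eta_2}(\lambda_1 r^\alpha, \lambda_2 r^\beta, \lambda_3 r^\gamma)$. Because $\Re(\delta_j)>0$ for $j=1,2$, the power factor $r^{\delta_j-1}$ is locally integrable at the origin and each product has admissible growth at infinity, so both Laplace transforms exist on a common right half-plane and the classical convolution theorem $\mathcal{L}\{f \ast g\}(s) = \mathcal{L}\{f\}(s)\,\mathcal{L}\{g\}(s)$ is applicable.

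Next I would invoke Theorem \ref{Lap} twice to evaluate the two factors,
\[
\mathcal{L}\{f\}(s) = \frac{1}{s^{\delta_1}}\Big(1 - \tfrac{\lambda_1}{s^\alpha} - \tfrac{\lambda_2}{s^\beta} - \tfrac{\lambda_3}{s^\gamma}\Big)^{-\eta_1}, \qquad \mathcal{L}\{g\}(s) = \frac{1}{s^{\delta_2}}\Big(1 - \tfrac{\lambda_1}{s^\alpha} - \tfrac{\lambda_2}{s^\beta} - \tfrac{\lambda_3}{s^\gamma}\Big)^{-\eta_2}.
\]
Multiplying these, the monomial prefactors combine as $s^{-\delta_1}s^{-\delta_2}=s^{-(\delta_1+\delta_2)}$ and the two identical bracketed expressions merge into a single power of exponent $-(\eta_1+\eta_2)$, giving
\[
\mathcal{L}\{f \ast g\}(s) = \frac{1}{s^{\delta_1 + \delta_2}}\Big(1 - \tfrac{\lambda_1}{s^\alpha} - \tfrac{\lambda_2}{s^\beta} - \tfrac{\lambda_3}{s^\gamma}\Big)^{-(\eta_1 + \eta_2)}.
\]

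Finally I would recognise the right-hand side as exactly the Laplace transform $\mathcal{L}\{r^{\delta_1 + \delta_2 - 1} E_{\alpha,\beta,\gamma,\delta_1+\delta_2}^{\eta_1+\eta_2}(\lambda_1 r^\alpha, \lambda_2 r^\beta, \lambda_3 r^\gamma)\}(s)$ via a third application of Theorem \ref{Lap}, now with the shifted parameters $\delta_1 + \delta_2$ (whose real part is still positive, so the hypotheses of that theorem persist) and $\eta_1 + \eta_2$. Appealing to the injectivity of the Laplace transform (Lerch's theorem), the two time-domain functions whose transforms agree must coincide, which is precisely the asserted identity. The only genuine care needed is the bookkeeping of the convergence region: Theorem \ref{Lap} carries auxiliary smallness conditions on $s$, but since all three transforms are analytic on a common right half-plane and agree there, analytic continuation removes these restrictions, so the equality of the underlying functions holds for all $r>0$. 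This final continuation/uniqueness step, rather than any computation, is the part that most warrants explicit justification.
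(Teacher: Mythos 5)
Your proposal is correct and follows essentially the same route as the paper: apply the Laplace convolution theorem, evaluate both factors via Theorem \ref{Lap}, combine the powers $s^{-\delta_1}s^{-\delta_2}$ and the identical brackets into exponent $-(\eta_1+\eta_2)$, recognise the product as the transform of the target function, and invert. Your added remarks on Lerch's theorem and analytic continuation only make explicit the justification the paper leaves implicit.
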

\begin{proof}
	By using the theorem of convolution for the Laplace transformation and Theorem \ref{Lap}, we get
\begin{align*}
&\mathcal{L}\Biggl\{\left( r^{\delta_{1}-1} E_{\alpha,\beta,\gamma, \delta_1}^{\eta_1}(\lambda_{1}r^{\alpha},\lambda_{2}r^{\beta}, \lambda_{3}r^{\gamma})\right) \ast \left( r^{\delta_{2}-1} E_{\alpha,\beta,\gamma, \delta_2}^{\eta_2}(\lambda_{1}r^{\alpha},\lambda_{2}r^{\beta}, \lambda_{3}r^{\gamma})\right) \Biggr\}(s)\\
=&\mathcal{L}\left\lbrace r^{\delta_{1}-1} E_{\alpha,\beta,\gamma, \delta_1}^{\eta_1}(\lambda_{1}r^{\alpha},\lambda_{2}r^{\beta}, \lambda_{3}r^{\gamma}) \right\rbrace (s) \mathcal{L}\left\lbrace r^{\delta_{2}-1} E_{\alpha,\beta,\gamma, \delta_2}^{\eta_2}(\lambda_{1}r^{\alpha},\lambda_{2}r^{\beta}, \lambda_{3}r^{\gamma})\right\rbrace(s) \\
=&\frac{1}{s^{\delta_1}}\left( 1-\frac{\lambda_{1}}{s^{\alpha}}-\frac{\lambda_{2}}{s^{\beta}}-\frac{\lambda_{3}}{s^{\gamma}}\right)^{-\eta_{1}}\frac{1}{s^{\delta_2}}\left(1- \frac{\lambda_{1}}{s^{\alpha}}-\frac{\lambda_{2}}{s^{\beta}}-\frac{\lambda_{3}}{s^{\gamma}}\right)^{-\eta_2}\\
=&\frac{1}{s^{\delta_1+\delta_2}}\left( 1-\frac{\lambda_{1}}{s^{\alpha}}-\frac{\lambda_{2}}{s^{\beta}}-\frac{\lambda_{3}}{s^{\gamma}}\right)^{-(\eta_1+\eta_2)}\\
=&\mathcal{L}\left\lbrace r^{\delta_{1}+\delta_{2}-1}E_{\alpha,\beta,\gamma, \delta_1+\delta_{2}}^{\eta_1+\eta_{2}}(\lambda_{1}r^{\alpha},\lambda_{2}r^{\beta}, \lambda_{3}r^{\gamma}) \right\rbrace (s).
\end{align*}
Taking inverse Laplace transform both sides to the above expression, we acquire the desired result.
\end{proof}
\section{Fractional calculus of trivariate M--L function} \label{sec:3}
 In this section firstly, we investigate $n$-th order derivative and integration of a newly defined trivariate Mittag-Leffler type function. Next using these results we will investigate fractional derivative and fractional integral of a trivarivate M--L function in R--L and Caputo senses.
 
 \begin{thm}
 	Let $\alpha,\beta,\gamma,\delta,\eta,\lambda_i \in \mathbb{C}$ with $\Re(\alpha),\Re(\beta)$,and $\Re(\gamma) > 0,i=1,2,3$. Then for arbitrary $n \in \mathbb{N}$, the following formula holds true:
 \begin{equation}\label{ordinary derivative}
 \left(\frac{d}{dr}\right)^n\left[ r^{\delta-1} E_{\alpha,\beta,\gamma,\delta}^{\eta}(\lambda_1 r^{\alpha},\lambda_2 r^{\beta},\lambda_3 r^\gamma)\right] =r^{\delta-n-1} E_{\alpha,\beta,\gamma,\delta-n}^{\eta}(\lambda_1r^\alpha,\lambda_2r^\beta,\lambda_3r^\gamma).
 \end{equation}
 \end{thm}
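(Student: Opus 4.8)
The plan is to establish \eqref{ordinary derivative} by differentiating the defining triple series of the univariate version \eqref{trivtype} term by term. First I would record the expansion
\[
r^{\delta-1}E_{\alpha,\beta,\gamma,\delta}^{\eta}(\lambda_1 r^\alpha,\lambda_2 r^\beta,\lambda_3 r^\gamma)=\sum_{l=0}^{\infty}\sum_{p=0}^{\infty}\sum_{k=0}^{\infty}\frac{(\eta)_{l+p+k}}{\Gamma(l\alpha+p\beta+k\gamma+\delta)}\frac{\lambda_1^l\lambda_2^p\lambda_3^k}{l!\,p!\,k!}\, r^{l\alpha+p\beta+k\gamma+\delta-1},
\]
and observe that each summand is a constant multiple of a single power $r^{s}$ with $s=l\alpha+p\beta+k\gamma+\delta-1$, so that the whole task reduces to differentiating these powers.

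Second, I would apply the elementary power rule $\left(\tfrac{d}{dr}\right)^n r^{s}=\tfrac{\Gamma(s+1)}{\Gamma(s-n+1)}\,r^{s-n}$, which for our exponent reads
\[
\left(\frac{d}{dr}\right)^n r^{\,l\alpha+p\beta+k\gamma+\delta-1}=\frac{\Gamma(l\alpha+p\beta+k\gamma+\delta)}{\Gamma(l\alpha+p\beta+k\gamma+\delta-n)}\, r^{\,l\alpha+p\beta+k\gamma+\delta-n-1}.
\]
The decisive observation is that the factor $\Gamma(l\alpha+p\beta+k\gamma+\delta)$ produced by the power rule cancels exactly against the identical Gamma factor sitting in the denominator of the series coefficient, leaving $\Gamma\bigl(l\alpha+p\beta+k\gamma+(\delta-n)\bigr)$ in its place. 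Relabelling $\delta-n$ as the new last parameter, the resulting triple series is precisely the expansion of $r^{(\delta-n)-1}E_{\alpha,\beta,\gamma,\delta-n}^{\eta}(\lambda_1 r^\alpha,\lambda_2 r^\beta,\lambda_3 r^\gamma)$, which is the claimed right-hand side.

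The point that requires care — and the main obstacle — is justifying the interchange of the $n$-fold differentiation with the triple summation. For this I would invoke the absolute and local uniform convergence of \eqref{tML4} noted after its definition (via the Srivastava--Daoust criterion for Lauricella-type series), together with the fact that the termwise-differentiated series is again of the same Lauricella type and hence still converges locally uniformly for $r>0$; the standard theorem on differentiation of uniformly convergent series then legitimises the termwise operation. I would also insert a brief remark that the power rule stays valid even in the degenerate case where $s$ is a nonnegative integer smaller than $n$: there the falling factorial vanishes while $\Gamma(s-n+1)$ has a pole, so both sides equal zero and the identity remains consistent. With the interchange justified and the Gamma cancellation in hand, \eqref{ordinary derivative} follows at once.
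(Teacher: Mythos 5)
Your proof is correct and follows essentially the same route as the paper's own argument: termwise $n$-fold differentiation of the series \eqref{trivtype} via the power rule, with the factor $\Gamma(l\alpha+p\beta+k\gamma+\delta)$ produced by differentiation cancelling against the denominator to yield the parameter shift $\delta\mapsto\delta-n$. Your explicit justification of the interchange of differentiation and summation (and of the degenerate integer-exponent case) is more careful than the paper, which simply asserts the termwise step, but the substance of the argument is identical.
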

\begin{proof}
By using \eqref{trivtype} and differentiating term by term under the summation signs, we acquire that
\begin{align*}
&\left(\frac{d}{dr}\right)^n\left[ r^{\delta-1} E_{\alpha,\beta,\gamma,\delta}^{\eta}(\lambda_1r^{\alpha},\lambda_2r^{\beta},\lambda_3r^\gamma)\right]\\ &=\sum_{l=0}^{\infty}\sum_{p=0}^{\infty}\sum_{k=0}^{\infty}\frac{(\eta)_{l+p+k}}{\Gamma(l\alpha+p\beta+k\gamma+\delta)}\frac{\lambda_{1}^l\lambda_{2}^p\lambda_{3}^k}{l!p!k!} \left( \frac{d}{dr}\right)^n [r^{l\alpha+p\beta+k\gamma+\delta-1}]\\
&=\sum_{l=0}^{\infty}\sum_{p=0}^{\infty}\sum_{k=0}^{\infty}\frac{(\eta)_{l+p+k}}{\Gamma(l\alpha+p\beta+k\gamma+\delta)}\frac{\lambda_{1}^l\lambda_{2}^p\lambda_{3}^k}{l!p!k!} \frac{\Gamma(l\alpha+p\beta+k\gamma+\delta)}{\Gamma(l\alpha+p\beta+k\gamma+\delta-n)} [r^{l\alpha+p\beta+k\gamma+\delta-n-1}]\\
&=\sum_{l=0}^{\infty}\sum_{p=0}^{\infty}\sum_{k=0}^{\infty}\frac{(\eta)_{l+p+k}}{\Gamma(l\alpha+p\beta+k\gamma+\delta-n)}\frac{\lambda_{1}^l\lambda_{2}^p\lambda_{3}^k}{l!p!k!} r^{l\alpha+p\beta+k\gamma+\delta-n-1}\\
&=r^{\delta-n-1} E_{\alpha,\beta,\gamma,\delta-n}^{\eta}(\lambda_1r^\alpha,\lambda_2r^\beta,\lambda_3r^\gamma), 
\end{align*}
which proves \eqref{ordinary derivative}. 
\end{proof}
\begin{coroll}
Let $\alpha,\beta,\gamma,\delta,\eta,\lambda_i \in \mathbb{C}$ with $\Re(\alpha),\Re(\beta),\Re(\gamma), \Re(\delta) > 0,i=1,2,3$. Then the following holds:
\begin{equation*}
\int\limits_{0}^{r} s^{\delta-1} E_{\alpha,\beta,\gamma,\delta}^{\eta}(\lambda_1 s^{\alpha},\lambda_2 s^{\beta},\lambda_3 s^\gamma) \mathrm{d}s=r^{\delta} E_{\alpha,\beta,\gamma,\delta+1}^{\eta}(\lambda_1r^{\alpha},\lambda_2r^{\beta},\lambda_3r^\gamma).
\end{equation*}
\end{coroll}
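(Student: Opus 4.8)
The plan is to reduce the corollary to the $n$-th order derivative formula \eqref{ordinary derivative} just established, treating it as an antiderivative identity and combining it with the fundamental theorem of calculus; I will also indicate the equivalent self-contained route of integrating the defining triple series term by term.

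For the direct route I would start from the series representation \eqref{trivtype} of $s^{\delta-1}E_{\alpha,\beta,\gamma,\delta}^{\eta}(\lambda_1 s^\alpha,\lambda_2 s^\beta,\lambda_3 s^\gamma)$ and integrate under the triple summation sign. The only integral to evaluate is the elementary
\[
\int_0^r s^{l\alpha+p\beta+k\gamma+\delta-1}\,\mathrm{d}s=\frac{r^{l\alpha+p\beta+k\gamma+\delta}}{l\alpha+p\beta+k\gamma+\delta},
\]
which is well defined because $\Re(l\alpha+p\beta+k\gamma+\delta)>0$. The key algebraic step is then the gamma recurrence $\Gamma(z+1)=z\Gamma(z)$ applied with $z=l\alpha+p\beta+k\gamma+\delta$: the extra denominator factor $l\alpha+p\beta+k\gamma+\delta$ merges with $\Gamma(l\alpha+p\beta+k\gamma+\delta)$ to produce $\Gamma(l\alpha+p\beta+k\gamma+\delta+1)$, so that each summand becomes exactly the corresponding summand of $r^{\delta}E_{\alpha,\beta,\gamma,\delta+1}^{\eta}$; factoring out $r^{\delta}$ then yields the claimed identity.

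Alternatively, and more in the spirit of a corollary, I would invoke \eqref{ordinary derivative} with $n=1$ and with $\delta$ replaced by $\delta+1$, which reads
\[
\frac{\mathrm{d}}{\mathrm{d}r}\left[r^{\delta}E_{\alpha,\beta,\gamma,\delta+1}^{\eta}(\lambda_1 r^\alpha,\lambda_2 r^\beta,\lambda_3 r^\gamma)\right]=r^{\delta-1}E_{\alpha,\beta,\gamma,\delta}^{\eta}(\lambda_1 r^\alpha,\lambda_2 r^\beta,\lambda_3 r^\gamma).
\]
This identifies $r^{\delta}E_{\alpha,\beta,\gamma,\delta+1}^{\eta}(\cdot)$ as an antiderivative of the integrand, so the fundamental theorem of calculus evaluates the integral as the difference of this antiderivative at $r$ and at $0$. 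At the lower limit the factor $s^{\delta}\to 0$ as $s\to 0^{+}$ because $\Re(\delta)>0$, while $E_{\alpha,\beta,\gamma,\delta+1}^{\eta}(0,0,0)=1/\Gamma(\delta+1)$ is finite, so the boundary term at $0$ vanishes and only the value at $r$ survives.

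The only points requiring care are analytic rather than computational: the interchange of integration and summation in the direct route, and the existence of the antiderivative in the second route, both of which are licensed by the absolute and locally uniform convergence of the triple series recorded after \eqref{trivtype}. I expect the vanishing of the boundary contribution at $s=0$ (equivalently, the harmlessness of the lower limit in the term-by-term integration) to be the only genuinely delicate point, and it is settled at once by the hypothesis $\Re(\delta)>0$.
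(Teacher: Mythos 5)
Your proposal is correct, and your second route is precisely the derivation the paper intends: the corollary is stated without proof as an immediate consequence of the derivative formula \eqref{ordinary derivative}, applied with $n=1$ and $\delta$ replaced by $\delta+1$, combined with the fundamental theorem of calculus. Your explicit check that the boundary term at $s=0$ vanishes because $\Re(\delta)>0$ (which matters when $\Re(\delta)<1$, since the integrand is then improper at the origin), together with the self-contained term-by-term integration via $\Gamma(z+1)=z\Gamma(z)$, supplies exactly the details the paper leaves implicit.
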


Next we consider the R--L type fractional integral and derivative of a trivariate M--L function of order $\alpha \in \mathbb{C}$ where $\Re(\alpha)>0$.
\begin{thm}
Let $a \in \mathbb{R_{+}},\nu,\alpha,\beta,\gamma,\delta,\eta,\lambda_i \in \mathbb{C}$ with $\Re(\nu),\Re(\alpha),\Re(\beta),\Re(\gamma),\Re(\delta) > 0,i=1,2,3$.Then for $ r,y > a $, there holds the following relations:
\begin{align}\nonumber
&\left( \prescript{}{}I^{\nu}_{a_+}\left( (r-a)^{\delta-1} E_{\alpha,\beta,\gamma, \delta}^{\eta}(\lambda_{1}(r-a)^{\alpha},\lambda_{2}(r-a)^{\beta}, \lambda_{3}(r-a)^{\gamma})\right)\right)(y) \\
&=(y-a)^{\delta +\nu-1}E_{\alpha,\beta,\gamma, \delta +\nu}^{\eta}(\lambda_{1}(y-a)^{\alpha},\lambda_{2}(y-a)^{\beta}, \lambda_{3}(y-a)^{\gamma}),
\end{align}
and
\begin{align}\nonumber \label{B}
&\left( \prescript{}{}D^{\nu}_{a_+}\left( (r-a)^{\delta-1} E_{\alpha,\beta,\gamma, \delta}^{\eta}(\lambda_{1}(r-a)^{\alpha},\lambda_{2}(r-a)^{\beta}, \lambda_{3}(r-a)^{\gamma})\right) \right)(y) \\
&=(y-a)^{\delta -\nu-1}E_{\alpha,\beta,\gamma, \delta-\nu}^{\eta}(\lambda_{1}(y-a)^{\alpha},\lambda_{2}(y-a)^{\beta}, \lambda_{3}(y-a)^{\gamma}).
\end{align}
\end{thm}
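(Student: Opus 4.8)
The plan is to prove both identities by applying the fractional operators term by term to the triple series \eqref{trivtype}, relying on the absolute and locally uniform convergence noted after Definition \ref{def2} to justify interchanging the operator with the summations. The essential ingredient is the action of the Riemann--Liouville integral on a shifted power function.

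First I would establish the power-function identity
\begin{equation*}
\left(\prescript{}{}I^{\nu}_{a_+}(r-a)^{\sigma-1}\right)(y)=\frac{\Gamma(\sigma)}{\Gamma(\sigma+\nu)}(y-a)^{\sigma+\nu-1},\qquad \Re(\sigma)>0,
\end{equation*}
by inserting the definition \eqref{R-L integral}, substituting $s=a+(y-a)t$, and recognising the Beta integral of Definition \ref{beta}, which yields $B(\sigma,\nu)=\Gamma(\sigma)\Gamma(\nu)/\Gamma(\sigma+\nu)$. Applying this with $\sigma=l\alpha+p\beta+k\gamma+\delta$ to each term of \eqref{trivtype}, the factor $\Gamma(l\alpha+p\beta+k\gamma+\delta)$ produced by the integration cancels exactly against the corresponding factor in the denominator of \eqref{tML4}, leaving $\Gamma(l\alpha+p\beta+k\gamma+\delta+\nu)$ in its place. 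Reassembling the series then reproduces the trivariate Mittag--Leffler function with $\delta$ replaced by $\delta+\nu$, which is the first claim.

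For the fractional derivative I would avoid a direct computation and instead exploit the results already in hand. Writing $n=\lfloor\Re(\nu)\rfloor+1$, the definition \eqref{R-L derivative} gives $\prescript{}{}D^{\nu}_{a_+}=\frac{d^n}{dr^n}\,\prescript{}{}I^{n-\nu}_{a_+}$. Since $\Re(n-\nu)=n-\Re(\nu)>0$ and $\Re(\delta)>0$, the integral formula just proved applies with $\nu$ replaced by $n-\nu$ and produces
\begin{equation*}
(y-a)^{\delta+(n-\nu)-1}E_{\alpha,\beta,\gamma,\delta+n-\nu}^{\eta}\bigl(\lambda_1(y-a)^{\alpha},\lambda_2(y-a)^{\beta},\lambda_3(y-a)^{\gamma}\bigr).
\end{equation*}
Applying the $n$-th order derivative formula \eqref{ordinary derivative} (with the parameter $\delta+n-\nu$ in place of $\delta$) then lowers the third subscript by $n$, sending $\delta+n-\nu$ to $\delta-\nu$ and the exponent to $\delta-\nu-1$, which is exactly the second claim.

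I expect the main obstacle to be the rigorous justification of the term-by-term operations rather than the algebra: one must invoke the absolute and locally uniform convergence of \eqref{trivtype}, together with the fact that integration and differentiation preserve it, in order to move $\prescript{}{}I^{\nu}_{a_+}$ and $\frac{d^n}{dr^n}$ inside the triple sum. A secondary point worth flagging is that the gamma ratios remain well defined throughout the derivative computation precisely because the intermediate parameter $\delta+n-\nu$ has positive real part, so the integral step is unambiguous and the subsequent differentiation reduces to the routine application of \eqref{ordinary derivative}.
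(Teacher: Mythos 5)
Your proposal is correct and follows essentially the same route as the paper: apply the Riemann--Liouville power-function rule $\left(\prescript{}{}I^{\nu}_{a_+}(r-a)^{\sigma-1}\right)(y)=\frac{\Gamma(\sigma)}{\Gamma(\sigma+\nu)}(y-a)^{\sigma+\nu-1}$ term by term to the triple series for the integral statement, then obtain the derivative statement by writing $\prescript{}{}D^{\nu}_{a_+}=\frac{d^n}{dy^n}\,\prescript{}{}I^{n-\nu}_{a_+}$, invoking the just-proved integral result with order $n-\nu$, and finishing with the $n$-th order derivative formula \eqref{ordinary derivative}. The only cosmetic difference is that you derive the power-function rule from the Beta integral, whereas the paper cites it from Samko et al.
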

\begin{proof}
By the aid of formulas \eqref{R-L integral} and \eqref{trivtype} with the following relation (\cite{Samko et al.}, Eq. (2.44))
\begin{equation*}
\left( \prescript{}{}I^{\nu}_{a_+} (r-a)^{\gamma-1}\right) (y)=\frac{\Gamma(\gamma)}{\Gamma(\gamma+\nu)}(y-a)^{\gamma+\nu-1},
\end{equation*} 
where $\alpha,\beta \in \mathbb{C}$ with $\Re(\alpha),\Re(\beta)>0$, leads to
\begin{align*}\nonumber
&\left( \prescript{}{}I^{\nu}_{a_+}\left( (r-a)^{\delta-1} E_{\alpha,\beta,\gamma, \delta}^{\eta}(\lambda_{1}(r-a)^{\alpha},\lambda_{2}(r-a)^{\beta}, \lambda_{3}(r-a)^{\gamma})\right)\right)(y) \\
&=\sum_{l=0}^{\infty}\sum_{p=0}^{\infty}\sum_{k=0}^{\infty}\frac{(\eta)_{l+p+k}}{\Gamma(l\alpha+p\beta+k\gamma+\delta)}\frac{\lambda_{1}^l\lambda_{2}^p\lambda_{3}^k}{l!p!k!}\left( \prescript{}{}I^{\nu}_{a_+} (r^{l\alpha+p\beta+k\gamma+\delta-1})\right) (y)  \\
&=(y-a)^{\delta +\nu-1}E_{\alpha,\beta,\gamma, \delta +\nu}^{\eta}(\lambda_{1}(y-a)^{\alpha},\lambda_{2}(y-a)^{\beta}, \lambda_{3}(y-a)^{\gamma}),  \quad \text{for}\quad r,y > a.
\end{align*}

To prove \eqref{B}, we are using the results of \eqref{R-L derivative} and \eqref{trivtype}  obtain that
\begin{align*}
&\left( \prescript{}{}D^{\nu}_{a_+}\left( (r-a)^{\delta-1} E_{\alpha,\beta,\gamma, \delta}^{\eta}(\lambda_{1}(r-a)^{\alpha},\lambda_{2}(r-a)^{\beta}, \lambda_{3}(r-a)^{\gamma}) \right)\right) (y) \\
=&\left(\frac{d}{dy}\right)^{n} \left( \prescript{}{}I^{n-\nu}_{a_+}\left( (r-a)^{\delta-1} E_{\alpha,\beta,\gamma, \delta}^{\eta}(\lambda_{1}(r-a)^{\alpha},\lambda_{2}(r-a)^{\beta}, \lambda_{3}(r-a)^{\gamma}) \right)\right) (y)\\ =&\left(\frac{d}{dy}\right)^{n} (y-a)^{\delta +n-\nu-1}E_{\alpha,\beta,\gamma, \delta+n-\nu}^{\eta}(\lambda_{1}(y-a)^{\alpha},\lambda_{2}(y-a)^{\beta}, \lambda_{3}(y-a)^{\gamma}).
\end{align*}
Result \eqref{B} is obtained by the virtue of \eqref{ordinary derivative}.
\end{proof}

Next we will consider the fractional derivative of M--L type function with three variable in Caputo's sense.
\begin{lem}\label{lem1}
Suppose that $\gamma, \nu \in \mathbb{C}$ with $\Re(\nu) \geq 0$. Then Caputo fractional differentiation of $\frac{(r-a)^{\gamma}}{\Gamma(\gamma+1)}$  is given by:
\begin{equation*}
\prescript{C}{}D^{\nu}_{a_+}\left( \frac{(r-a)^{\gamma}}{\Gamma(\gamma+1)}\right)(y)=\frac{(y-a)^{\gamma-\nu}}{\Gamma(\gamma-\nu+1)}, \quad r,y >a.
\end{equation*}
\end{lem}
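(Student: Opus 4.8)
The plan is to reduce the claim to the elementary action of the Riemann--Liouville integral on a single power function, with no series or interchange of limits involved. Writing $n=\lfloor\Re(\nu)\rfloor+1$, the Caputo operator factors as $\prescript{C}{}D^{\nu}_{a_+}g=\prescript{}{}I^{\,n-\nu}_{a_+}\big(\tfrac{d^n}{dr^n}g\big)$, so the whole computation splits into an ordinary $n$-fold differentiation followed by a fractional integration; this uniform choice of $n$ handles the integer-order case as well, so no separate argument for $\nu\in\mathbb{N}_0$ is needed.

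For the differentiation step, repeated application of $\tfrac{d}{dr}$ gives $\tfrac{d^n}{dr^n}(r-a)^{\gamma}=\gamma(\gamma-1)\cdots(\gamma-n+1)(r-a)^{\gamma-n}=\tfrac{\Gamma(\gamma+1)}{\Gamma(\gamma-n+1)}(r-a)^{\gamma-n}$, hence $\tfrac{d^n}{dr^n}\tfrac{(r-a)^{\gamma}}{\Gamma(\gamma+1)}=\tfrac{(r-a)^{\gamma-n}}{\Gamma(\gamma-n+1)}$. It then remains to apply $\prescript{}{}I^{\,n-\nu}_{a_+}$ to this single power, for which I would invoke the identity already used earlier in the paper, $\big(\prescript{}{}I^{\mu}_{a_+}(r-a)^{\rho-1}\big)(y)=\tfrac{\Gamma(\rho)}{\Gamma(\rho+\mu)}(y-a)^{\rho+\mu-1}$, taken with $\rho=\gamma-n+1$ and $\mu=n-\nu$. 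This gives $\tfrac{1}{\Gamma(\gamma-n+1)}\cdot\tfrac{\Gamma(\gamma-n+1)}{\Gamma(\gamma-\nu+1)}(y-a)^{\gamma-\nu}$; the two $\Gamma(\gamma-n+1)$ factors cancel and the exponent $(\gamma-n)+(n-\nu)$ collapses to $\gamma-\nu$, producing exactly $\tfrac{(y-a)^{\gamma-\nu}}{\Gamma(\gamma-\nu+1)}$, as claimed.

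I expect the only real obstacle to be bookkeeping the admissible range of $\gamma$ rather than any analytic subtlety. The power-function integral identity requires $\Re(\rho)=\Re(\gamma-n+1)>0$, i.e. $\Re(\gamma)>n-1=\lfloor\Re(\nu)\rfloor$, both to guarantee that the kernel $(y-s)^{n-\nu-1}(s-a)^{\gamma-n}$ is integrable near $s=a$ and to make the cancellation $\Gamma(\gamma-n+1)^{-1}\Gamma(\gamma-n+1)=1$ legitimate. I would therefore record this restriction on $\gamma$, and flag the genuine edge case it excludes: when $\gamma\in\{0,1,\dots,n-1\}$ the $n$-th derivative vanishes identically, so the Caputo derivative equals $0$ rather than the stated expression (the constant function, $\gamma=0$ with $0<\nu<1$, is the canonical instance, since $\prescript{C}{}D^{\nu}_{a_+}$ annihilates constants). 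Such exponents do not occur in the applications of Section~\ref{sec:3}, where the relevant powers have positive real part, so the direct computation above is exactly what is needed there.
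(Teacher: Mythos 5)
Your proposal is correct and takes essentially the same route as the paper's own proof: with $n=\lfloor\Re(\nu)\rfloor+1$ it also writes $\prescript{C}{}D^{\nu}_{a_+}=\prescript{}{}I^{\,n-\nu}_{a_+}\circ\frac{\mathrm{d}^n}{\mathrm{d}r^n}$, differentiates the power to get $\frac{(r-a)^{\gamma-n}}{\Gamma(\gamma-n+1)}$, and then evaluates the remaining Riemann--Liouville integral — the paper does this last step by hand via the substitution $u=\frac{y-s}{y-a}$ and the Beta function $\mathbf{B}(n-\nu,\gamma-n+1)$, whereas you quote the equivalent power rule from Samko et al.\ (Eq.\ (2.44)) that the paper itself invokes elsewhere. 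Your explicit bookkeeping of the restriction $\Re(\gamma)>\lfloor\Re(\nu)\rfloor$ (and the degenerate case $\gamma\in\{0,1,\dots,n-1\}$, where the Caputo derivative vanishes) is a sound clarification of a hypothesis the paper's statement of the lemma leaves implicit, surfacing only later as the condition $\delta-1>\lfloor\rho\rfloor$ in Lemma \ref{Lem:biML}.
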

\begin{proof}
	Let $n \coloneqq \lfloor \Re(\nu) \rfloor  +1$, $\Re(\nu)\geq 0$. Then we have: 
	\begin{align*}
	\prescript{C}{}D^{\nu}_{a_+}\left( \frac{(r-a)^{\gamma}}{\Gamma(\gamma+1)}\right)(y)&=\prescript{}{}I^{n-\nu}_{a_+}\left( \frac{d^{n}}{dr^{n}}\frac{(r-a)^{\gamma}}{\Gamma(\gamma+1)} \right)(y)
	=\prescript{}{}I^{n-\nu}_{a_+}\left( \frac{(r-a)^{\gamma-n}}{\Gamma(\gamma-n+1)} \right)(y)\\
	&=\frac{1}{\Gamma(n-\nu)}\int_{a}^{y}(y-s)^{n-\nu-1}\frac{(s-a)^{\gamma-n}}{\Gamma(\gamma-n+1)}\mathrm{d}s\\
	&\overset{u=\frac{y-s}{y-a}}{=}\frac{1}{\Gamma(n-\nu)}\frac{(y-a)^{\gamma-\nu}}{\Gamma(\gamma-n+1)}\mathbf{B}(n-\nu, \gamma-n+1)\\
	&=\frac{(y-a)^{\gamma-\nu}}{\Gamma(\gamma-\nu+1)}.
	\end{align*}
\end{proof}
\begin{thm}
	Let $\lambda_{i}\in \mathbb{C}, i=1,2,3$, $\nu,\alpha, \beta, \gamma, \delta, \eta \in \mathbb{C}$ with $\Re(\nu) \geq 0$,$\Re(\alpha)>0$, $\Re(\beta)>0$, $\Re(\gamma)>0$ and $\Re(\delta)>0$. Then the fractional differentiation of the function \eqref{trivtype} in Caputo sense is given by:
	\begin{align*}
	&\left( \prescript{C}{}D^{\nu}_{a_+}\left( (r-a)^{\delta-1} E_{\alpha,\beta,\gamma, \delta}^{\eta}(\lambda_{1}(r-a)^{\alpha},\lambda_{2}(r-a)^{\beta}, \lambda_{3}(r-a)^{\gamma}) \right)\right) (y) \\
	&=(y-a)^{\delta -\nu-1}E_{\alpha,\beta,\gamma, \delta-\nu}^{\eta}(\lambda_{1}(y-a)^{\alpha},\lambda_{2}(y-a)^{\beta}, \lambda_{3}(y-a)^{\gamma}), \quad r,y > a.
	\end{align*}
\end{thm}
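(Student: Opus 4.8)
The plan is to expand the function under the Caputo operator as its defining triple series \eqref{trivtype}, differentiate the series term by term, and then invoke Lemma \ref{lem1} on each resulting power function, in exact parallel to the Riemann--Liouville case treated above. First I would write
\[
(r-a)^{\delta-1}E_{\alpha,\beta,\gamma,\delta}^{\eta}\big(\lambda_{1}(r-a)^{\alpha},\lambda_{2}(r-a)^{\beta},\lambda_{3}(r-a)^{\gamma}\big)
=\sum_{l=0}^{\infty}\sum_{p=0}^{\infty}\sum_{k=0}^{\infty}\frac{(\eta)_{l+p+k}}{\Gamma(l\alpha+p\beta+k\gamma+\delta)}\frac{\lambda_{1}^{l}\lambda_{2}^{p}\lambda_{3}^{k}}{l!\,p!\,k!}\,(r-a)^{l\alpha+p\beta+k\gamma+\delta-1}.
\]
Because this series converges absolutely and locally uniformly (as noted immediately after \eqref{trivtype}) and $\prescript{C}{}D^{\nu}_{a_+}$ is a linear integro-differential operator, I would justify interchanging it with the summation, thereby reducing the problem to evaluating the Caputo derivative of each monomial $(r-a)^{l\alpha+p\beta+k\gamma+\delta-1}$.

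Next, I would apply Lemma \ref{lem1} with the exponent taken to be $l\alpha+p\beta+k\gamma+\delta-1$. Writing the monomial as $\Gamma(l\alpha+p\beta+k\gamma+\delta)$ times the normalized power used in the lemma, each term transforms into
\[
\Gamma(l\alpha+p\beta+k\gamma+\delta)\,\frac{(y-a)^{l\alpha+p\beta+k\gamma+\delta-\nu-1}}{\Gamma(l\alpha+p\beta+k\gamma+\delta-\nu)}.
\]
The factor $\Gamma(l\alpha+p\beta+k\gamma+\delta)$ cancels the identical gamma in the denominator of the series coefficient, leaving precisely the coefficient of the trivariate M--L function with $\delta$ replaced by $\delta-\nu$. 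Reassembling the triple sum and pulling out the overall factor $(y-a)^{\delta-\nu-1}$ then yields
\[
(y-a)^{\delta-\nu-1}E_{\alpha,\beta,\gamma,\delta-\nu}^{\eta}\big(\lambda_{1}(y-a)^{\alpha},\lambda_{2}(y-a)^{\beta},\lambda_{3}(y-a)^{\gamma}\big),
\]
which is the claimed identity.

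The main obstacle will be the rigorous justification of the term-by-term differentiation together with the boundary behaviour of the lowest-order terms: when the exponent $l\alpha+p\beta+k\gamma+\delta-1$ happens to be a nonnegative integer strictly less than $n=\lfloor\Re(\nu)\rfloor+1$, the ordinary $n$-th derivative built into the Caputo definition annihilates that monomial, and one must check that Lemma \ref{lem1} reproduces this vanishing. It does, since in that situation $\Gamma(l\alpha+p\beta+k\gamma+\delta-\nu)$ has a pole and forces the corresponding term to zero, in agreement with the convention already used inside the proof of Lemma \ref{lem1}. Apart from this bookkeeping, the computation is a direct mirror of the Riemann--Liouville derivative established in \eqref{B}, the only structural difference being that Lemma \ref{lem1}, rather than the power-integral formula of (\cite{Samko et al.}, Eq. (2.44)) composed with \eqref{ordinary derivative}, supplies the per-term evaluation.
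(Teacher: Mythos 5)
Your proposal is correct and follows essentially the same route as the paper's own proof: expand \eqref{trivtype} as the triple series, apply Lemma \ref{lem1} term by term to each power $(r-a)^{l\alpha+p\beta+k\gamma+\delta-1}$, cancel the gamma factors, and reassemble into $(y-a)^{\delta-\nu-1}E_{\alpha,\beta,\gamma,\delta-\nu}^{\eta}(\cdot)$. Your additional remark on the low-order terms (where the gamma pole in $\Gamma(l\alpha+p\beta+k\gamma+\delta-\nu)$ reproduces the annihilation of integer powers) is a welcome refinement that the paper itself leaves implicit.
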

\begin{proof}
    According to Lemma \ref{lem1}, fractionally differentiating the series \eqref{trivtype} term by term gives:
	\begin{align*}
	&\left(\prescript{C}{}D^{\nu}_{a_{+}}\left( (r-a)^{\delta-1} E_{\alpha,\beta,\gamma, \delta}^{\eta}(\lambda_{1}(r-a)^{\alpha},\lambda_{2}(r-a)^{\beta}, \lambda_{3}(r-a)^{\gamma}) \right)\right) (y)\\
	&=\sum_{l=0}^{\infty}\sum_{p=0}^{\infty}\sum_{k=0}^{\infty}\frac{(\eta)_{l+p+k}}{l!p!k!}\prescript{C}{}D^{\nu}_{a_{+}}\left(\frac{(r-a)^{l\alpha+p\beta+k\gamma+\delta-1}}{\Gamma(l\alpha+p\beta+k\gamma+\delta)} \right)(y)\\
	&=(y-a)^{\delta-\nu-1} \sum_{l=0}^{\infty}\sum_{p=0}^{\infty}\sum_{k=0}^{\infty}\frac{(\eta)_{l+p+k}}{\Gamma(l\alpha+p\beta+k\gamma+\delta-\nu)}\frac{(\lambda_{1}(y-a)^{\alpha})^{l}}{l!}\frac{(\lambda_{2}(y-a)^{\beta})^{n}}{p!}\frac{(\lambda_{3}(y-a)^{\gamma})^{k}}{k!}\\
	&=(y-a)^{\delta-\nu-1}E_{\alpha,\beta,\gamma, \delta-\nu}^{\eta}(\lambda_{1}(y-a)^{\alpha},\lambda_{2}(y-a)^{\beta}, \lambda_{3}(y-a)^{\gamma}),
	\end{align*} 
	where the result is also uniformly convergent.
\end{proof}

\section{Multi-term fractional differential equations}\label{sec:multi}
In this section, we derive an explicit solutions to homogeneous and inhomogenous multi-term FDEs.
\subsection{Analytical representation of solution to the homogeneous multi-term fractional differential equation}
In this subsection, we consider the initial value problem for linear homogeneous FDE with three independent fractional orders: 
\begin{equation} \label{uniBiv}
\left( \prescript{C}{}D^{\alpha}_{0+}y\right) (r)-\lambda_{3}\left( \prescript{C}{}D^{\beta}_{0+}y\right) (r)-\lambda_{2}\left( \prescript{C}{}D^{\gamma}_{0+}y\right) (r)-\lambda_{1}y(r)= 0,
\end{equation}
with initial condition $ y(0) = y_{0}$.

The following lemma and trinomial identity will be of significance for our results in the next theorem.

\begin{lem} \label{Lem:biML}
	For any parameters $\rho,\alpha,\beta,\gamma,\delta,\lambda_1,\lambda_2, \lambda_{3} \in\mathbb{R}$ satisfying $\rho \geq 0,\alpha,\beta,\gamma >0$ and $\delta-1>\lfloor\rho\rfloor$, we have:
	\[
	\left( \prescript{C}{}D^{\rho}_{0+}\Big[r^{\delta-1}E_{\alpha,\beta,\gamma,\delta}(\lambda_1r^{\alpha},\lambda_2r^{\beta}, \lambda_{3}r^{\gamma})\Big]\right)(y) =y^{\delta-\rho-1}E_{\alpha,\beta,\gamma,\delta-\rho}(\lambda_{1} y^{\alpha},\lambda_{2} y^{\beta},\lambda_{3} y^{\gamma}),\quad r,y > 0.
	\]
\end{lem}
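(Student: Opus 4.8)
The plan is to recognize this lemma as the $\eta=1$, $a=0$ specialization of the preceding Caputo-derivative theorem, and to prove it by differentiating the defining series \eqref{trivtype} term by term and invoking Lemma \ref{lem1} on each monomial power. The genuinely new feature, compared with that theorem, is the sharp hypothesis $\delta-1>\lfloor\rho\rfloor$, which is precisely what guarantees that Lemma \ref{lem1} applies to \emph{every} term of the triple series, including the lowest-order one.

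First I would write, using \eqref{trivtype} with $\eta=1$ and $a=0$,
\[
r^{\delta-1}E_{\alpha,\beta,\gamma,\delta}(\lambda_1r^{\alpha},\lambda_2r^{\beta},\lambda_3r^{\gamma})
=\sum_{l,p,k=0}^{\infty}\frac{(1)_{l+p+k}\,\lambda_1^l\lambda_2^p\lambda_3^k}{l!\,p!\,k!}\,\frac{r^{\theta_{lpk}}}{\Gamma(\theta_{lpk}+1)},
\]
where $\theta_{lpk}:=l\alpha+p\beta+k\gamma+\delta-1$ and I have absorbed the gamma factor so as to match the normalization $\frac{(r-a)^{\gamma}}{\Gamma(\gamma+1)}$ used in Lemma \ref{lem1}. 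Since the series converges absolutely and locally uniformly (as noted after \eqref{trivtype}), and since $\prescript{C}{}D^{\rho}_{0+}$ is the composition of an ordinary $n$-fold derivative ($n=\lfloor\rho\rfloor+1$) with the fractional integral $I^{n-\rho}_{0+}$ — both of which commute with a locally uniform sum — the Caputo operator may be applied term by term.

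Next, for each index $(l,p,k)$ I would apply Lemma \ref{lem1} with $\gamma\mapsto\theta_{lpk}$ and $\nu\mapsto\rho$, obtaining
\[
\prescript{C}{}D^{\rho}_{0+}\!\left(\frac{r^{\theta_{lpk}}}{\Gamma(\theta_{lpk}+1)}\right)(y)=\frac{y^{\theta_{lpk}-\rho}}{\Gamma(\theta_{lpk}-\rho+1)}.
\]
Reinserting the coefficients, writing $y^{\theta_{lpk}-\rho}=y^{\delta-\rho-1}\,y^{l\alpha+p\beta+k\gamma}$, and factoring out $y^{\delta-\rho-1}$ reassembles exactly the series \eqref{trivtype} (with $\eta=1$ and $\delta$ replaced by $\delta-\rho$) for $y^{\delta-\rho-1}E_{\alpha,\beta,\gamma,\delta-\rho}(\lambda_1y^{\alpha},\lambda_2y^{\beta},\lambda_3y^{\gamma})$, which is the claimed identity.

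The main obstacle — and the reason the stronger hypothesis $\delta-1>\lfloor\rho\rfloor$ is imposed — lies in verifying that Lemma \ref{lem1} genuinely applies to the $(l,p,k)=(0,0,0)$ term, whose exponent $\theta_{000}=\delta-1$ is the smallest. The derivation of Lemma \ref{lem1} evaluates the Beta function $\mathbf{B}(n-\nu,\gamma-n+1)$, which converges only when $\Re(\gamma-n+1)>0$, i.e.\ $\Re(\gamma)>\lfloor\Re(\nu)\rfloor$; for integer exponents below $n$ the Caputo derivative degenerates and the clean power rule fails. The hypothesis $\delta-1>\lfloor\rho\rfloor$ forces $\theta_{lpk}\ge\delta-1>\lfloor\rho\rfloor$ for all $(l,p,k)$, so the power rule holds uniformly across the series and no term need be treated separately. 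I would therefore devote the bulk of the write-up to this convergence bookkeeping and to justifying the interchange of $\prescript{C}{}D^{\rho}_{0+}$ with the infinite sum, the remaining algebra being the routine reindexing already carried out in the preceding theorem.
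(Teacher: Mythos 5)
Your proposal is correct and follows essentially the same route as the paper's own proof, which likewise specializes Lemma \ref{lem1} to $a=0$ and applies the resulting Caputo power rule term by term to the series \eqref{trivtype} with $\eta=1$ (so $(1)_{l+p+k}=(l+p+k)!$), reassembling the result as $y^{\delta-\rho-1}E_{\alpha,\beta,\gamma,\delta-\rho}(\lambda_1 y^{\alpha},\lambda_2 y^{\beta},\lambda_3 y^{\gamma})$. Your extra bookkeeping — that $\delta-1>\lfloor\rho\rfloor$ ensures the power rule applies even to the lowest exponent $\theta_{000}=\delta-1$, and that locally uniform convergence justifies the interchange — is precisely the justification the paper leaves implicit.
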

\begin{proof}
	From Lemma \ref{lem1}, we have
	\[ \label {C}
	\left( \prescript{C}{}D^{\nu}_{0+}(\frac{r^{\mu}}{\Gamma(\mu+1)})\right)(y)=\frac{y^{\mu-\nu}}{\Gamma(\mu-\nu+1)},\quad\mu>\lfloor\nu\rfloor,\quad r,y > 0.
	\]
	Therefore, in accordance with \eqref{C}, fractionally differentiating the function \eqref{trivtype} term by term :
	\begin{align*}
	&\left( \prescript{C}{}D^{\rho}_{0+}\left[\sum_{l=0}^{\infty}\sum_{p=0}^{\infty}\sum_{k=0}^{\infty}\frac{(l+p+k)!\lambda_1^l\lambda_2^p\lambda_3^kr^{l\alpha+p\beta+k\gamma+\delta-1}}{\Gamma(l\alpha+p\beta+k\gamma+\delta)l!p!k!}\right]\right)(y)\\
	&=\sum_{l=0}^{\infty}\sum_{p=0}^{\infty}\sum_{k=0}^{\infty}\frac{(l+p+k)!\lambda_{1}^l\lambda_2^p\lambda_3^k y^{l\alpha+p\beta+k\gamma+\delta-\rho-1}}{\Gamma(l\alpha+p\beta+k\gamma+\delta-\rho)l!p!k!}\\
	&=y^{\delta-\rho-1}E_{\alpha,\beta,\gamma,\delta-\rho}(\lambda_{1} y^{\alpha},\lambda_{2} y^{\beta},\lambda_{3} y^{\gamma}),\quad r,y > 0.
	\end{align*}
\end{proof}

\textbf{Pascal's tetrahedron}.  If $q \geq 1$, $lpk \neq 0$, then
\begin{equation}\label{pascal}
\binom{q}{l,p,k}=\binom{q-1}{l-1,p,k}+\binom{q-1}{l,p-1,k}+\binom{q-1}{l,p,k-1}.
\end{equation}
In other case, the so-called Pascal's rule holds, for example $k=0$ and $lp \neq 0$
\begin{equation*}
\binom{q}{l,p}=\binom{q-1}{l-1,p}+\binom{q-1}{l,p-1}.
\end{equation*}
If $q= l+p+k$, then trinomial coefficient is defined by
\begin{equation*}
\binom{l+p+k}{l,p,k}=\frac{(l+p+k)!}{l!p!k!}.
\end{equation*}

\begin{thm} \label{unithm}
	The univariate form \eqref{trivtype} of the trivariate M--L function \eqref{tML4} with $\eta=1$, gives a solution 
	\begin{equation} \label{solution of homogenous case}
	y(r)= \left( 1+ \lambda_{1}r^{\alpha}E_{\alpha,\alpha-\gamma,\alpha-\beta, \alpha+1}(\lambda_{1}r^{\alpha}, \lambda_{2}r^{\alpha-\gamma}, \lambda_{3}r^{\alpha-\beta})\right)y_0,
	\end{equation}
for  intial value problem for the multi-term differential equation involving three independent fractional orders \eqref{uniBiv}.

\end{thm}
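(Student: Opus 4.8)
The plan is to verify the claim by direct substitution into \eqref{uniBiv}, exploiting the reduction formulas already proved. Write the candidate solution as $y(r)=y_0+\lambda_1 y_0\,\phi(r)$, where $\phi(r)=r^{\alpha}E_{\alpha,\alpha-\gamma,\alpha-\beta,\alpha+1}(\lambda_1 r^{\alpha},\lambda_2 r^{\alpha-\gamma},\lambda_3 r^{\alpha-\beta})$ is the univariate form \eqref{trivtype} with subscript parameters $(\alpha,\alpha-\gamma,\alpha-\beta)$ and fourth parameter $\delta=\alpha+1$; the hypotheses $1\geq\alpha>\beta>\gamma>0$ ensure $\alpha-\beta,\alpha-\gamma>0$, so $\phi$ is well defined. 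Since the Caputo derivative of a constant vanishes for positive order, substituting $y$ into \eqref{uniBiv} and cancelling the common factor $\lambda_1 y_0$ reduces everything to the single scalar identity $\prescript{C}{}D^{\alpha}_{0+}\phi-\lambda_3\,\prescript{C}{}D^{\beta}_{0+}\phi-\lambda_2\,\prescript{C}{}D^{\gamma}_{0+}\phi-\lambda_1\phi=1$, where the right-hand $1$ is exactly what the zeroth-order term $-\lambda_1 y_0$ supplies after rearranging, together with the initial condition.

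Next I would compute the three Caputo derivatives of $\phi$ by Lemma \ref{Lem:biML}, which simply lowers the fourth parameter by the order of differentiation: $\prescript{C}{}D^{\alpha}_{0+}\phi=E_{\alpha,\alpha-\gamma,\alpha-\beta,1}(\,\cdot\,)$, $\prescript{C}{}D^{\beta}_{0+}\phi=r^{\alpha-\beta}E_{\alpha,\alpha-\gamma,\alpha-\beta,\alpha-\beta+1}(\,\cdot\,)$ and $\prescript{C}{}D^{\gamma}_{0+}\phi=r^{\alpha-\gamma}E_{\alpha,\alpha-\gamma,\alpha-\beta,\alpha-\gamma+1}(\,\cdot\,)$, all carrying the same argument triple $(\lambda_1 r^{\alpha},\lambda_2 r^{\alpha-\gamma},\lambda_3 r^{\alpha-\beta})$. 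Expanding each as the triple series \eqref{trivtype}, the combined prefactors $\lambda_3 r^{\alpha-\beta}$, $\lambda_2 r^{\alpha-\gamma}$ and $\lambda_1 r^{\alpha}$ sitting in front of $\prescript{C}{}D^{\beta}_{0+}\phi$, $\prescript{C}{}D^{\gamma}_{0+}\phi$ and $\phi$ each raise exactly one summation index by one. After re-indexing, all four series are supported on the same monomials $\lambda_1^l\lambda_2^p\lambda_3^k\,r^{l\alpha+p(\alpha-\gamma)+k(\alpha-\beta)}$ with identical $\Gamma$-denominators $\Gamma(l\alpha+p(\alpha-\gamma)+k(\alpha-\beta)+1)$, so the identity can be checked coefficient by coefficient.

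The crux is then purely combinatorial. With $\eta=1$ the Pochhammer symbol becomes $(l+p+k)!$, so the coefficient of the common monomial on the left-hand side equals $\binom{l+p+k}{l,p,k}-\binom{l+p+k-1}{l-1,p,k}-\binom{l+p+k-1}{l,p-1,k}-\binom{l+p+k-1}{l,p,k-1}$, with the convention that any trinomial coefficient having a negative lower entry is zero. By Pascal's tetrahedron \eqref{pascal} this vanishes for every $(l,p,k)\neq(0,0,0)$, while the single term $(l,p,k)=(0,0,0)$ survives from $\prescript{C}{}D^{\alpha}_{0+}\phi$ alone, where $\prescript{C}{}D^{\alpha}_{0+}[r^{\alpha}/\Gamma(\alpha+1)]=1$, giving exactly the required right-hand side $1$. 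For the initial condition, $\phi(r)=O(r^{\alpha})$ with $\alpha>0$ yields $\phi(0)=0$, hence $y(0)=y_0$.

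I expect the main obstacle to be the boundary bookkeeping in the Pascal step: the recurrence \eqref{pascal} is stated for $lpk\neq0$, so the faces and edges of the index lattice, where one or two of $l,p,k$ vanish, must be handled via the degenerate Pascal rule and the zero-coefficient convention, which is precisely what the subtracted terms above encode. A secondary technical point is justifying term-by-term Caputo differentiation, legitimate by the local uniform convergence of \eqref{trivtype}, together with checking that Lemma \ref{Lem:biML} still applies at the endpoint $\alpha=1$, where the order-$\alpha$ operator is an ordinary derivative but the formula still returns $1$. As an independent cross-check I would Laplace-transform \eqref{uniBiv}, obtaining $Y(s)=y_0\,(s^{\alpha-1}-\lambda_3 s^{\beta-1}-\lambda_2 s^{\gamma-1})/(s^{\alpha}-\lambda_3 s^{\beta}-\lambda_2 s^{\gamma}-\lambda_1)$, and verify via Theorem \ref{Lap} that $\mathcal{L}\{y_0+\lambda_1 y_0\phi\}(s)$ collapses to the same expression.
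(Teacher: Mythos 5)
Your proposal is correct and follows essentially the same route as the paper's proof: substitute the candidate into \eqref{uniBiv}, apply Lemma \ref{Lem:biML} to differentiate the triple series term by term, re-index so that each lower-order term shifts one summation index, and cancel coefficients via Pascal's tetrahedron \eqref{pascal}, with the $(l,p,k)=(0,0,0)$ term surviving exactly as you describe. The only cosmetic differences are that you normalize by $\lambda_1 y_0$ (harmless, since $\lambda_1=0$ or $y_0=0$ renders the claim trivial) and check coefficients on the common monomials pointwise, whereas the paper matches the three shifted series wholesale against the expansions of $\lambda_3\,\prescript{C}{}D^{\beta}_{0+}y$, $\lambda_2\,\prescript{C}{}D^{\gamma}_{0+}y$ and $\lambda_1 y$.
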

\begin{proof}
	It should be noted that the Caputo derivative of constant function is equal to zero. We will apply Lemma \ref{Lem:biML} to show that $y(r)$ is a solution of \eqref{uniBiv}. Starting from the series \eqref{trivtype}, we evaluate the fractional differ-integrals of $y(r)$ as below:
	\allowdisplaybreaks
	\begin{align*}
		\left(\prescript{C}{}D^{\alpha}_{0+}y\right) (r)&= \prescript{C}{}D^{\alpha}_{0+}\left(1+\sum_{l=0}^{\infty}\sum_{p=0}^{\infty}\sum_{k=0}^{\infty}\binom{l+p+k}{l,p,k}\frac{\lambda_{1}^{l+1}\lambda_2^p\lambda_3^kr^{l\alpha+p(\alpha-\gamma)+k(\alpha-\beta)+\alpha}}{\Gamma(l\alpha+p(\alpha-\gamma)+k(\alpha-\beta)+\alpha+1)} \right) y_0 \\
		&=\sum_{l=0}^{\infty}\sum_{p=0}^{\infty}\sum_{k=0}^{\infty}\binom{l+p+k}{l,p,k}\lambda_{1}^{l+1} \lambda_2^p\lambda_3^k\prescript{C}{}D^{\alpha}_{0+}\left(\frac{r^{l\alpha+p(\alpha-\gamma)+k(\alpha-\beta)+\alpha}}{\Gamma(l\alpha+p(\alpha-\gamma)+k(\alpha-\beta)+\alpha+1)} \right)y_0 \\
		&=\sum_{l=0}^{\infty}\sum_{p=0}^{\infty}\sum_{k=0}^{\infty}\binom{l+p+k}{l,p,k} \frac{\lambda_{1}^{l+1} \lambda_2^p\lambda_3^kr^{l\alpha+p(\alpha-\gamma)+k(\alpha-\beta)}}{\Gamma(l\alpha+p(\alpha-\gamma)+k(\alpha-\beta)+1)}y_0\\
		=\Big( \lambda_{1}&+\sum_{l=1}^{\infty}\sum_{p=0}^{\infty}\sum_{k=0}^{\infty}\binom{l+p+k-1}{l-1,p,k}\frac{\lambda_{1}^{l+1} \lambda_2^p\lambda_3^kr^{l\alpha+p(\alpha-\gamma)+k(\alpha-\beta)}}{\Gamma(l\alpha+p(\alpha-\gamma)+k(\alpha-\beta)+1)}\\
		&+\sum_{l=0}^{\infty}\sum_{p=1}^{\infty}\sum_{k=0}^{\infty}\binom{l+p+k-1}{l,p-1,k}\frac{\lambda_{1}^{l+1} \lambda_2^p\lambda_3^kr^{l\alpha+p(\alpha-\gamma)+k(\alpha-\beta)}}{\Gamma(l\alpha+p(\alpha-\gamma)+k(\alpha-\beta)+1)}\\
		&+\sum_{l=0}^{\infty}\sum_{p=0}^{\infty}\sum_{k=1}^{\infty}\binom{l+p+k-1}{l,p,k-1}\frac{\lambda_{1}^{l+1} \lambda_2^p\lambda_3^kr^{l\alpha+p(\alpha-\gamma)+k(\alpha-\beta)}}{\Gamma(l\alpha+p(\alpha-\gamma)+k(\alpha-\beta)+1)}\Big)y_0
     \end{align*}
Similarly, we have :
\begin{align*}
\lambda_{3}\left( \prescript{C}{}D^{\beta}_{0+}y\right) (r)&=\lambda_{3}\prescript{C}{}D^{\beta}_{0+}\Big(1+ \lambda_{1}r^{\alpha}E_{\alpha,\alpha-\gamma,\alpha-\beta, \alpha+1}(\lambda_{1}r^{\alpha}, \lambda_{2}r^{\alpha-\gamma}, \lambda_{3}r^{\alpha-\beta})\Big) y_0 \\
&=\lambda_{1}\lambda_{3}r^{\alpha-\beta}E_{\alpha,\alpha-\gamma,\alpha-\beta, \alpha-\beta+1}(\lambda_{1}r^{\alpha}, \lambda_{2}r^{\alpha-\gamma}, \lambda_{3}r^{\alpha-\beta})y_0\\
&=\sum_{l=0}^{\infty}\sum_{p=0}^{\infty}\sum_{k=0}^{\infty}\binom{l+p+k}{l,p,k}\frac{\lambda_{1}^{l+1} \lambda_2^p\lambda_3^{k+1}r^{l\alpha+p(\alpha-\gamma)+(k+1)(\alpha-\beta)}}{\Gamma(l\alpha+p(\alpha-\gamma)+(k+1)(\alpha-\beta)+1)}y_0\\
&=\sum_{l=0}^{\infty}\sum_{p=0}^{\infty}\sum_{k=1}^{\infty}\binom{l+p+k-1}{l,p,k-1}\frac{\lambda_{1}^{l+1} \lambda_2^p\lambda_3^kr^{l\alpha+p(\alpha-\gamma)+k(\alpha-\beta)}}{\Gamma(l\alpha+p(\alpha-\gamma)+k(\alpha-\beta)+1)}y_0,
\end{align*}
and 
\begin{align*}
\lambda_{2} \left(\prescript{C}{}D^{\gamma}_{0+}y \right) (r)&=\lambda_{2} \prescript{C}{}D^{\gamma}_{0+}\Big(1+ \lambda_{1}r^{\alpha}E_{\alpha,\alpha-\gamma,\alpha-\beta, \alpha+1}(\lambda_{1}r^{\alpha}, \lambda_{2}r^{\alpha-\gamma}, \lambda_{3}r^{\alpha-\beta})\Big)y_{0} \\
&=\lambda_{1}\lambda_{2}r^{\alpha-\gamma}E_{\alpha,\alpha-\gamma,\alpha-\beta, \alpha-\gamma+1}(\lambda_{1}r^{\alpha}, \lambda_{2}r^{\alpha-\gamma}, \lambda_{3}r^{\alpha-\gamma})y_0\\
&=\sum_{l=0}^{\infty}\sum_{p=0}^{\infty}\sum_{k=0}^{\infty}\binom{l+p+k}{l,p,k}\frac{\lambda_{1}^{l+1} \lambda_2^{p+1}\lambda_3^{k}r^{l\alpha+(p+1)(\alpha-\gamma)+k(\alpha-\beta)}}{\Gamma(l\alpha+(p+1)(\alpha-\gamma)+k(\alpha-\beta)+1)}y_0\\
&=\sum_{l=0}^{\infty}\sum_{p=1}^{\infty}\sum_{k=0}^{\infty}\binom{l+p+k-1}{l,p-1,k}\frac{\lambda_{1}^{l+1} \lambda_2^{p}\lambda_3^{k}r^{l\alpha+p(\alpha-\gamma)+k(\alpha-\beta)}}{\Gamma(l\alpha+p(\alpha-\gamma)+k(\alpha-\beta)+1)}y_0,
\end{align*}
and 
\begin{align*}
\lambda_{1}y(r)&=\lambda_{1}\left( 1+ \lambda_{1}r^{\alpha}E_{\alpha,\alpha-\gamma,\alpha-\beta, \alpha+1}(\lambda_{1}r^{\alpha}, \lambda_{2}r^{\alpha-\gamma}, \lambda_{3}r^{\alpha-\beta})\right)  y_0  \\
&=\left( \lambda_{1}+\sum_{l=0}^{\infty}\sum_{p=0}^{\infty}\sum_{k=0}^{\infty}\binom{l+p+k}{l,p,k}\frac{\lambda_{1}^{l+2} \lambda_2^p\lambda_3^kr^{l\alpha+p(\alpha-\gamma)+k(\alpha-\beta)+ \alpha}}{\Gamma(l\alpha+p(\alpha-\gamma)+k(\alpha-\beta)+\alpha + 1)}\right) y_0\\
&=\left( \lambda_{1}+\sum_{l=1}^{\infty}\sum_{p=0}^{\infty}\sum_{k=0}^{\infty}\binom{l+p+k-1}{l-1,p,k}\frac{\lambda_{1}^{l+1} \lambda_2^p\lambda_3^kr^{l\alpha+p(\alpha-\gamma)+k(\alpha-\beta)}}{\Gamma(l\alpha+p(\alpha-\gamma)+k(\alpha-\beta)+1)}\right) y_0.
\end{align*}
Taking a linear combination, we find
\begin{align*}
&\left(\prescript{C}{}D^{\alpha}_{0+}y\right)(r)-\lambda_{3}\left( \prescript{C}{}D^{\beta}_{0+}y\right)(r)-\lambda_{2}\left( \prescript{C}{}D^{\gamma}_{0+}y\right)(r)-\lambda_{1}y(r)=0.
\end{align*}
which satisfying the initial data $y(0)=y_{0}$.
Thus, the results are proved.
\end{proof}
\begin{remark}
	The Cauchy problem \eqref{uniBiv}  has a solution given by using Fox-Wright functions \eqref{fox}
	\allowdisplaybreaks
	\begin{align*}
	y(r)&=\sum_{d=0}^{\infty}\Big(\sum_{l+p=d}\Big)\frac{\lambda^{l}_{1}\lambda^{p}_{2}}{l!p!}r^{(\alpha-\beta)d+\beta l+(\beta-\gamma)p}\Biggl\{ \prescript{}{1}{}{\Psi}_{1}\left[\begin{array}{ccc}
	(d+1,1) \\ ((\alpha-\beta)d+\beta l+(\beta-\gamma)p+1,\alpha-\beta) 
	\end{array} \Big| \lambda_{3}r^{\alpha-\beta}
	\right]\\
	&-\lambda_{3}r^{\alpha-\beta}\prescript{}{1}{}{\Psi}_{1}\left[\begin{array}{ccc}
	(d+1,1) \\ ((\alpha-\beta)(d+1)+\beta l+(\beta-\gamma)p+1,\alpha-\beta) 
	\end{array} \Big| \lambda_{3}r^{\alpha-\beta}
	\right]\\
	&-\lambda_{2}r^{\alpha-\gamma}\prescript{}{1}{}{\Psi}_{1}\left[\begin{array}{ccc}
	(d+1,1) \\ ((\alpha-\beta)d+\alpha-\gamma+\beta l+(\beta-\gamma)p+1,\alpha-\beta) 
	\end{array} \Big| \lambda_{3}r^{\alpha-\beta}
	\right] \Biggr\}y_{0}\\
	&=\Biggl\{\sum_{l=0}^{\infty}\sum_{p=0}^{\infty}\sum_{k=0}^{\infty}\frac{\lambda^{l}_{1}\lambda^{p}_{2}\lambda^{k}_{3}}{l!p!k!}\frac{\Gamma(l+p+k+1)r^{l\alpha+p(\alpha-\gamma)+k(\alpha-\beta)}}{\Gamma(l\alpha+p(\alpha-\gamma)+k(\alpha-\beta)+1)}\\
	&-\sum_{l=0}^{\infty}\sum_{p=0}^{\infty}\sum_{k=0}^{\infty}\frac{\lambda^{l}_{1}\lambda^{p}_{2}\lambda^{k+1}_{3}}{l!p!k!}\frac{\Gamma(l+p+k+1)r^{l\alpha+p(\alpha-\gamma)+(k+1)(\alpha-\beta)}}{\Gamma(l\alpha+p(\alpha-\gamma)+(k+1)(\alpha-\beta)+1)} \\
	&-\sum_{l=0}^{\infty}\sum_{p=0}^{\infty}\sum_{k=0}^{\infty}\frac{\lambda^{l}_{1}\lambda^{p+1}_{2}\lambda^{k}_{3}}{l!p!k!}\frac{\Gamma(l+p+k+1)r^{l\alpha+(p+1)(\alpha-\gamma)+k(\alpha-\beta)}}{\Gamma(l\alpha+(p+1)(\alpha-\gamma)+k(\alpha-\beta)+1)}\Biggr\}y_{0}\\
	&=\Biggl\{\sum_{l=0}^{\infty}\sum_{p=0}^{\infty}\sum_{k=0}^{\infty}\binom{l+p+k}{l,p,k}\frac{\lambda^{l}_{1}\lambda^{p}_{2}\lambda^{k}_{3}r^{l\alpha+p(\alpha-\gamma)+k(\alpha-\beta)}}{\Gamma(l\alpha+p(\alpha-\gamma)+k(\alpha-\beta)+1)}\\
	&-\sum_{l=0}^{\infty}\sum_{p=0}^{\infty}\sum_{k=0}^{\infty}\binom{l+p+k}{l,p,k}\frac{\lambda^{l}_{1}\lambda^{p}_{2}\lambda^{k+1}_{3}r^{l\alpha+p(\alpha-\gamma)+(k+1)(\alpha-\beta)}}{\Gamma(l\alpha+p(\alpha-\gamma)+(k+1)(\alpha-\beta)+1)}\\
	&-\sum_{l=0}^{\infty}\sum_{p=0}^{\infty}\sum_{k=0}^{\infty}\binom{l+p+k}{l,p,k}\frac{\lambda^{l}_{1}\lambda^{p+1}_{2}\lambda^{k}_{3}r^{l\alpha+(p+1)(\alpha-\gamma)+k(\alpha-\beta)}}{\Gamma(l\alpha+(p+1)(\alpha-\gamma)+k(\alpha-\beta)+1)}\Biggr\}y_{0}.
	\end{align*}
	Using Pascal's tetrahedron \eqref{pascal}, we derive the desired result:
	\allowdisplaybreaks
	\begin{align}
	y(r)&=\Big(\sum_{l=0}^{\infty}\sum_{p=0}^{\infty}\sum_{k=0}^{\infty}\binom{l+p+k}{l,p,k}\frac{\lambda^{l}_{1}\lambda^{p}_{2}\lambda^{k}_{3}r^{l\alpha+p(\alpha-\gamma)+k(\alpha-\beta)}}{\Gamma(l\alpha+p(\alpha-\gamma)+k(\alpha-\beta)+1)}\nonumber\\
	&-\sum_{l=0}^{\infty}\sum_{p=0}^{\infty}\sum_{k=1}^{\infty}\binom{l+p+k-1}{l,p,k-1}\frac{\lambda^{l}_{1}\lambda^{p}_{2}\lambda^{k}_{3}r^{l\alpha+p(\alpha-\gamma)+k(\alpha-\beta)}}{\Gamma(l\alpha+p(\alpha-\gamma)+k(\alpha-\beta)+1)}\nonumber\\
	&-\sum_{l=0}^{\infty}\sum_{p=1}^{\infty}\sum_{k=0}^{\infty}\binom{l+p+k-1}{l,p-1,k}\frac{\lambda^{l}_{1}\lambda^{p}_{2}\lambda^{k}_{3}r^{l\alpha+p(\alpha-\gamma)+k(\alpha-\beta)}}{\Gamma(l\alpha+p(\alpha-\gamma)+k(\alpha-\beta)+1)}\Big)y_{0}\nonumber\\
	&=\Big(1+\sum_{l=1}^{\infty}\sum_{p=0}^{\infty}\sum_{k=0}^{\infty}\binom{l+p+k-1}{l-1,p,k}\frac{\lambda^{l}_{1}\lambda^{p}_{2}\lambda^{k}_{3}r^{l\alpha+p(\alpha-\gamma)+k(\alpha-\beta)}}{\Gamma(l\alpha+p(\alpha-\gamma)+k(\alpha-\beta)+1)}\Big)y_{0}\nonumber\\
	&=\Big(1+\sum_{l=0}^{\infty}\sum_{p=0}^{\infty}\sum_{k=0}^{\infty}\binom{l+p+k}{l,p,k}\frac{\lambda^{l+1}_{1}\lambda^{p}_{2}\lambda^{k}_{3}r^{(l+1)\alpha+p(\alpha-\gamma)+k(\alpha-\beta)}}{\Gamma((l+1)\alpha+p(\alpha-\gamma)+k(\alpha-\beta)+1)}\Big)y_{0}\nonumber\\
	&=\Big(1+\lambda_{1}r^{\alpha}E_{\alpha,\alpha-\gamma,\alpha-\beta, \alpha+1}(\lambda_{1}r^{\alpha}, \lambda_{2}r^{\alpha-\gamma}, \lambda_{3}r^{\alpha-\beta})\Big)y_{0}.
	\end{align}
	Therefore, we show that the coincidence between our new results in terms of trivarite Mittag-Leffler type functions and the results shown in \cite{Kilbas} by means of generalized Wright functions.
\end{remark}
\subsection{Explicit solution of inhomogeneous differential equation with three fractional orders}
In this subsection, we investigate the exact analytical representation of solutions to linear inhomogeneous FDEs by the aid of the superposition principle to obtain solution of \eqref{uniBiv}. 

Consider the next two Caputo type multi-term FDEs with three independent orders, namely: inhomogeneous differential equation with homogeneous initial condition
\begin{equation} \label{3.1}
\begin{cases}
\left( \prescript{C}{}D^{\alpha}_{0+}y\right)(r)-\lambda_{3}\left( \prescript{C}{}D^{\beta}_{0+}y\right)(r)-\lambda_{2}\left( \prescript{C}{}D^{\gamma}_{0+}y\right)(r)-\lambda_{1}y(r)=g(r),\quad r > 0,\\
y(0)\equiv 0.
\end{cases}
\end{equation}
and homogeneous differential equation with inhomogeneous initial condition
\begin{equation} \label{3.2}
\begin{cases}
\left( \prescript{C}{}D^{\alpha}_{0+}y\right)(r)-\lambda_{3}\left( \prescript{C}{}D^{\beta}_{0+}y\right)(r)-\lambda_{2}\left( \prescript{C}{}D^{\gamma}_{0+}y\right)(r)-\lambda_{1}y(r)=0,\quad r > 0,\\
y(0)=y_0.
\end{cases}
\end{equation}
The next Lemma can be attained from classical ideas to get analytical solution of linear FDEs.
\begin{lem}
If $y_{1}(r)$ and $y_{2}(r)$ are the solutions of the problems \eqref{3.1} and \eqref{3.2}, respectively, then \\
$y(r)=y_{1}(r)+y_{2}(r)$ is the general solution of the Cauchy problem of \eqref{1.1}. 
\end{lem}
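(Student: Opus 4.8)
The plan is to exploit the linearity of the governing operator. First I would introduce the shorthand
\begin{equation*}
L[y](r)\coloneqq\left(\prescript{C}{}D^{\alpha}_{0+}y\right)(r)-\lambda_{3}\left(\prescript{C}{}D^{\beta}_{0+}y\right)(r)-\lambda_{2}\left(\prescript{C}{}D^{\gamma}_{0+}y\right)(r)-\lambda_{1}y(r),
\end{equation*}
so that the three Cauchy problems \eqref{1.1}, \eqref{3.1} and \eqref{3.2} may be written compactly as $L[y]=g$, $L[y_1]=g$, and $L[y_2]=0$, each paired with its respective initial datum. The crucial observation is that every Caputo differentiation operator $\prescript{C}{}D^{\nu}_{0+}$ acts linearly on its argument; this is immediate from the integral representation of the Caputo derivative (Definition \ref{caputo}), since both convolution against the kernel $(r-s)^{n-\nu-1}$ and ordinary $n$-th order differentiation are linear. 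Consequently $L$ is a linear operator, and in particular $L[y_1+y_2]=L[y_1]+L[y_2]$.

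With linearity in hand the verification is direct. Applying $L$ to the candidate $y=y_1+y_2$ and using that $y_1$ solves \eqref{3.1} while $y_2$ solves \eqref{3.2}, I would compute
\begin{equation*}
L[y_1+y_2]=L[y_1]+L[y_2]=g(r)+0=g(r),\quad r>0,
\end{equation*}
so the inhomogeneous equation in \eqref{1.1} is satisfied. It then remains to check the initial condition: since $y_1(0)=0$ by the homogeneous initial data of \eqref{3.1} and $y_2(0)=y_0$ by the inhomogeneous initial data of \eqref{3.2}, additivity of point evaluation gives $y(0)=y_1(0)+y_2(0)=0+y_0=y_0$, which is exactly the prescribed datum of \eqref{1.1}. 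Hence $y=y_1+y_2$ satisfies both the equation and the initial condition of the full problem.

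Finally, to justify the word \emph{general} rather than merely a solution, I would note that the same linearity shows any solution of \eqref{1.1} admits this decomposition: given an arbitrary solution $y$ of \eqref{1.1}, the function $y-y_2$ obeys $L[y-y_2]=g-0=g$ together with $(y-y_2)(0)=y_0-y_0=0$, so it solves \eqref{3.1} and therefore plays the role of $y_1$. I do not anticipate any genuine obstacle here, as the entire content reduces to the linearity of the Caputo operator combined with the additivity of the initial values; the only point warranting a line of care is confirming that the initial condition splits cleanly as $y(0)=y_1(0)+y_2(0)$, which holds because evaluation at $r=0$ is itself linear.
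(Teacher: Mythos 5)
Your proof is correct and is exactly the classical superposition argument the paper invokes without writing out: the paper states this lemma with no proof at all, remarking only that it \emph{can be attained from classical ideas}, and your linearity-of-$L$ computation together with the additivity of the initial data at $r=0$ is precisely that idea made explicit. Your closing observation that an arbitrary solution $y$ of \eqref{1.1} decomposes as $(y-y_{2})+y_{2}$ with $y-y_{2}$ solving \eqref{3.1} is a worthwhile extra step, since it is what actually justifies the word \emph{general} in the statement, a point the paper leaves entirely implicit.
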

Notice that the solution $y_{2}(r)$ of \eqref{3.2} have studied in Section 3.1. Thus, to acquire our target we need to find $y_{1}(r)$ which is a particular solution of \eqref{1.1}.
\begin{thm}
A solution $\tilde{y}\in C^{1}([0, \infty), \mathbb{R})$ of \eqref{1.1} satisfying homogeneous initial data $y(0)\equiv 0$ has the following form
\begin{equation} \label{tildex}
\tilde{y}(r)= \int_{0}^{r}(r-s)^{\alpha-1}E_{\alpha,\alpha-\gamma,\alpha-\beta, \alpha}(\lambda_{1}(r-s)^{\alpha},\lambda_{2}(r-s)^{\alpha-\gamma}, \lambda_{3}(r-s)^{\alpha-\beta})g(s)\mathrm{d}s.
\end{equation}
\end{thm}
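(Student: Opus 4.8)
The plan is to pass to the Laplace domain, where the whole verification collapses to an algebraic identity. I would first write the claimed solution as a convolution $\tilde y = K \ast g$ with kernel $K(r) = r^{\alpha-1} E_{\alpha,\alpha-\gamma,\alpha-\beta,\alpha}(\lambda_1 r^\alpha, \lambda_2 r^{\alpha-\gamma}, \lambda_3 r^{\alpha-\beta})$, and observe that $K$ is exactly the univariate function \eqref{trivtype} for the parameter tuple $(\alpha,\alpha-\gamma,\alpha-\beta,\alpha)$ with $\eta=1$. Since $1\geq\alpha>\beta>\gamma>0$, the three internal exponents $\alpha$, $\alpha-\gamma$, $\alpha-\beta$ are positive and $\Re(\delta)=\alpha>0$, so Theorem \ref{Lap} applies verbatim to $K$.

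Applying Theorem \ref{Lap} and clearing the denominator by multiplying numerator and denominator by $s^\alpha$, I obtain
\[
\mathcal{L}\{K\}(s)=\frac{1}{s^\alpha}\left(1-\frac{\lambda_1}{s^\alpha}-\frac{\lambda_2}{s^{\alpha-\gamma}}-\frac{\lambda_3}{s^{\alpha-\beta}}\right)^{-1}=\frac{1}{s^\alpha-\lambda_3 s^\beta-\lambda_2 s^\gamma-\lambda_1},
\]
which is precisely the reciprocal of the symbol of the multi-term operator on the left-hand side of \eqref{3.1}. By the Laplace convolution theorem, $\mathcal{L}\{\tilde y\}(s)=\mathcal{L}\{K\}(s)\,\mathcal{L}\{g\}(s)=G(s)\big/(s^\alpha-\lambda_3 s^\beta-\lambda_2 s^\gamma-\lambda_1)$, where $G=\mathcal{L}\{g\}$.

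It then remains to feed this back through the equation. Because each order lies in $(0,1]$, the Caputo rule $\mathcal{L}\{\prescript{C}{}D^\nu_{0+}\tilde y\}(s)=s^\nu \mathcal{L}\{\tilde y\}(s)-s^{\nu-1}\tilde y(0)$ holds for $\nu\in\{\alpha,\beta,\gamma\}$. After checking $\tilde y(0)=0$ (immediate, since the convolution integral over $[0,0]$ vanishes), all initial-value terms drop out, so transforming the left-hand side of \eqref{3.1} gives $(s^\alpha-\lambda_3 s^\beta-\lambda_2 s^\gamma-\lambda_1)\mathcal{L}\{\tilde y\}(s)=G(s)=\mathcal{L}\{g\}(s)$. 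Inverting the transform, using its injectivity on continuous functions of exponential order, yields exactly \eqref{3.1}.

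The main obstacle is not the formal algebra but the analytic bookkeeping needed to make the Laplace manipulations rigorous: I must verify that $\tilde y\in C^1([0,\infty),\mathbb{R})$ so that the Caputo derivatives are classically defined and the transform rule is legitimate, confirm the exponential-order growth of $K$, $g$ and $\tilde y$ required for existence and injectivity of the transforms, and track the half-plane $\Re(s)>0$ on which Theorem \ref{Lap} was justified (its convergence conditions hold for $\Re(s)$ large and extend by analytic continuation). Regularity of $\tilde y$ can be read off from the series \eqref{trivtype} for $K$, whose lowest power is $r^{\alpha-1}$ with $\alpha\leq 1$, together with the smoothing effect of convolving against $g\in C([0,T],\mathbb{R})$. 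An alternative, transform-free route is to differentiate the convolution term by term using Lemma \ref{Lem:biML} and collapse the resulting triple sums with the Pascal tetrahedron identity \eqref{pascal}, exactly as in the proof of Theorem \ref{unithm}; this avoids the growth hypotheses but is computationally heavier.
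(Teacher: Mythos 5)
Your proposal is correct, but it takes a genuinely different route from the paper. The paper works entirely in the time domain: it posits the convolution ansatz \eqref{tildyy} with an unknown density $h$, computes $\prescript{C}{}D^{\alpha}_{0+}\tilde{y}$ directly from Definition \ref{caputo} (legitimate since $\tilde{y}(0)=0$ makes the Caputo and Riemann--Liouville derivatives coincide via \eqref{case0}), interchanges integrals by Fubini, evaluates the inner integral with the beta function, differentiates the triple series term by term, and then reassembles the result using exactly the Pascal tetrahedron index shifts \eqref{pascal} into $\lambda_{3}(\prescript{C}{}D^{\beta}_{0+}\tilde{y})(r)+\lambda_{2}(\prescript{C}{}D^{\gamma}_{0+}\tilde{y})(r)+\lambda_{1}\tilde{y}(r)+h(r)$, forcing $h=g$; this is precisely the ``transform-free route'' you relegate to your final sentence. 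Your Laplace argument instead absorbs all of that combinatorics into Theorem \ref{Lap}, already proved in the paper, and reduces the verification to the algebraic identity $\mathcal{L}\{K\}(s)=\left(s^{\alpha}-\lambda_{3}s^{\beta}-\lambda_{2}s^{\gamma}-\lambda_{1}\right)^{-1}$ together with the standard Caputo transform rule for orders in $(0,1]$ with vanishing initial data --- your parameter substitution $(\alpha,\alpha-\gamma,\alpha-\beta,\alpha)$, $\eta=1$, and the clearing of denominators are all checked correctly, and the positivity hypotheses of Theorem \ref{Lap} hold because $1\geq\alpha>\beta>\gamma>0$. What your route buys is brevity and conceptual transparency (it also matches the treatment of this Cauchy problem by Kilbas et al.\ \cite{Kilbas}, whose agreement with the trivariate M--L representation the paper only establishes a posteriori in a remark); what it costs is the extra analytic hypotheses you honestly flag: $g$ is assumed only in $C([0,T],\mathbb{R})$, so to apply $\mathcal{L}$ and its injectivity you must first extend $g$ to $[0,\infty)$ with exponential order (harmless, since the Volterra-causal structure means $\tilde{y}$ on $[0,T]$ sees only $g|_{[0,T]}$), whereas the paper's series computation needs no growth assumptions. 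Note also that both arguments share the same unaddressed regularity caveat: for merely continuous $g$ the claimed $C^{1}$ smoothness of $\tilde{y}$ does not follow from the kernel's lowest-order term $r^{\alpha-1}/\Gamma(\alpha)$ alone when $\alpha<1$, and the paper's own proof is equally formal on this point, so your sketch is not weaker than the original there.
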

\begin{proof}
With the aid of the variation of constants method, every solution of inhomogeneous differential equation $\tilde{y}(r)$ should be hold as:
\begin{equation} \label{tildyy}
\tilde{y}(r)= \int_{0}^{r}(r-s)^{\alpha-1}E_{\alpha,\alpha-\gamma,\alpha-\beta, \alpha}(\lambda_{1}(r-s)^{\alpha},\lambda_{2}(r-s)^{\alpha-\gamma}, \lambda_{3}(r-s)^{\alpha-\beta})h(s)\mathrm{d}s,
\end{equation}
where $h(s), s\in[0,r]$ is an sought after scalar valued function which satisfying $\tilde{y}(0)=0$.

In accordance with Definition \ref{caputo} and applying Fubini's theorem for double integrals, we attain 
\allowdisplaybreaks
{\small
\begin{align*}
&(\prescript{C}{}D^{\alpha}_{0^{+}}\tilde{y})(r)
=(\prescript{}{}D^{\alpha}_{0^{+}}\tilde{y})(r)\\
&=\frac{1}{\Gamma(1-\alpha)}\frac{d}{dr}\int_{0}^{r}(r-s)^{-\alpha}\int_{0}^{s}(s-\tau)^{\alpha-1}E_{\alpha,\alpha-\gamma,\alpha-\beta, \alpha}(\lambda_1(s-\tau)^{\alpha}, \lambda_{2}(s-\tau)^{\alpha-\gamma}, \lambda_{3}(s-\tau)^{\alpha-\beta})h(\tau)d\tau ds\\
&=\frac{1}{\Gamma(1-\alpha)}\frac{d}{dr}\int_{0}^{r}\int_{0}^{s}(r-s)^{-\alpha}(s-\tau)^{\alpha-1}E_{\alpha,\alpha-\gamma,\alpha-\beta, \alpha}(\lambda_1(s-\tau)^{\alpha}, \lambda_{2}(s-\tau)^{\alpha-\gamma}, \lambda_{3}(s-\tau)^{\alpha-\beta})h(\tau)d\tau ds\\
&=\frac{1}{\Gamma(1-\alpha)}\frac{d}{dr}\int_{0}^{t}h(\tau)\left( \int_{\tau}^{r}(r-s)^{-\alpha}(s-\tau)^{\alpha-1}E_{\alpha,\alpha-\gamma,\alpha-\beta, \alpha}(\lambda_1(s-\tau)^{\alpha}, \lambda_{2}(s-\tau)^{\alpha-\gamma}, \lambda_{3}(s-\tau)^{\alpha-\beta})ds\right) d\tau\\
&=\frac{1}{\Gamma(1-\alpha)}\frac{d}{dr}\int_{0}^{r}h(\tau)\left( \int_{\tau}^{r}(r-s)^{-\alpha}(s-\tau)^{\alpha-1}\sum_{l,p,k=0}^{\infty}\binom{l+p+k}{l,p,k}\frac{\lambda_{1}^{l} \lambda_{2}^p\lambda_{3}^k (s-\tau)^{l\alpha+p(\alpha-\gamma)+k(\alpha-\beta)}}{\Gamma(l\alpha+p(\alpha-\gamma)+k(\alpha-\beta)+\alpha)}ds\right) d\tau\\
&=\frac{1}{\Gamma(1-\alpha)}\sum_{l,p,k=0}^{\infty}\binom{l+p+k}{l,p,k}\frac{d}{dr}\int_{0}^{r}\frac{\lambda_{1}^{l} \lambda_2^p\lambda_3^k (r-\tau)^{l\alpha+p(\alpha-\gamma)+k(\alpha-\beta)}}{\Gamma(l\alpha+p(\alpha-\gamma)+k(\alpha-\beta)+\alpha)}h(\tau)d\tau\\
&\times\mathbf{B}(1-\alpha,l\alpha+p(\alpha-\gamma)+k(\alpha-\beta)+\alpha)\\
&=\sum_{l=0}^{\infty}\sum_{p=0}^{\infty}\sum_{k=0}^{\infty}\binom{l+p+k}{l,p,k}\lambda_{1}^{l} \lambda_2^p\lambda_3^k\frac{d}{dr}\int_{0}^{r}\frac{(r-\tau)^{l\alpha+p(\alpha-\gamma)+k(\alpha-\beta)}}{\Gamma(l\alpha+p(\alpha-\gamma)+k(\alpha-\beta)+1)}h(\tau)d\tau\\
&=h(r)+\sum_{l=1}^{\infty}\sum_{p=0}^{\infty}\sum_{k=0}^{\infty}\binom{l+p+k-1}{l-1,p,k}\lambda_{1}^{l} \lambda_2^p\lambda_3^k \int_{0}^{r}\frac{(r-\tau)^{l\alpha+p(\alpha-\gamma)+k(\alpha-\beta)-1}}{\Gamma(l\alpha+p(\alpha-\gamma)+k(\alpha-\beta))}h(\tau)d\tau \\
&+\sum_{l=0}^{\infty}\sum_{p=1}^{\infty}\sum_{k=0}^{\infty}\binom{l+p+k-1}{l,p-1,k}\lambda_{1}^{l} \lambda_2^p\lambda_3^k \int_{0}^{r}\frac{(r-\tau)^{l\alpha+p(\alpha-\gamma)+k(\alpha-\beta)-1}}{\Gamma(l\alpha+p(\alpha-\gamma)+k(\alpha-\beta))}h(\tau)d\tau\\
&+\sum_{l=0}^{\infty}\sum_{p=0}^{\infty}\sum_{k=1}^{\infty}\binom{l+p+k-1}{l,p,k-1}\lambda_{1}^{l} \lambda_2^p\lambda_3^k \int_{0}^{r}\frac{(r-\tau)^{l\alpha+p(\alpha-\gamma)+k(\alpha-\beta)-1}}{\Gamma(l\alpha+p(\alpha-\gamma)+k(\alpha-\beta))}h(\tau)d\tau\\
&=h(r)+\sum_{l=0}^{\infty}\sum_{p=0}^{\infty}\sum_{k=0}^{\infty}\binom{l+p+k}{l,p,k}\lambda_{1}^{l+1} \lambda_2^p\lambda_3^k \int_{0}^{r}\frac{(r-\tau)^{(l+1)\alpha+p(\alpha-\gamma)+k(\alpha-\beta)-1}}{\Gamma((l+1)\alpha+p(\alpha-\gamma)+k(\alpha-\beta))}h(\tau)d\tau\\
&+\sum_{l=0}^{\infty}\sum_{p=0}^{\infty}\sum_{k=0}^{\infty}\binom{l+p+k}{l,p,k}\lambda_{1}^{l} \lambda_2^{n+1}\lambda_3^k \int_{0}^{r}\frac{(r-\tau)^{l\alpha+(p+1)(\alpha-\gamma)+k(\alpha-\beta)-1}}{\Gamma(l\alpha+(p+1)(\alpha-\gamma)+k(\alpha-\beta))}h(\tau)d\tau\\
&+\sum_{l=0}^{\infty}\sum_{p=0}^{\infty}\sum_{k=0}^{\infty}\binom{l+p+k}{l,p,k}\lambda_{1}^{l} \lambda_2^p\lambda_3^{k+1} \int_{0}^{r}\frac{(r-\tau)^{l\alpha+p(\alpha-\gamma)+(k+1)(\alpha-\beta)-1}}{\Gamma(l\alpha+p(\alpha-\gamma)+(k+1)(\alpha-\beta))}h(\tau)d\tau\\
&\coloneqq\lambda_{3}\left( \prescript{C}{}D^{\beta}_{0+}y \right)(r)+\lambda_{2}\left( \prescript{C}{}D^{\gamma}_{0+}y\right)(r)+ \lambda_{1}y(r) + g(r).
\end{align*}
}
So we achieve $h(r)=g(r)$ for $r>0$. The proof is complete.
\end{proof}
\begin{remark}
	The Cauchy problem for inhomogeneous equation \eqref{3.1} has a solution which is a particular solution of \eqref{1.1} given by
	\begin{equation*}
	\tilde{y}(r)=\int_{0}^{r}(r-s)^{\alpha-1}G_{\gamma,\beta,\alpha;\lambda_{3}}(r-s)g(s)\mathrm{d}s,
	\end{equation*}
	where 
	\begin{align*}
	G_{\gamma,\beta,\alpha;\lambda_{3}}(z)&=\sum_{d=0}^{\infty}\Big(\sum_{l+p=d}\Big)\frac{\lambda^{l}_{1}\lambda^{p}_{2}}{l!p!}z^{(\alpha-\beta)d+\beta l+(\beta-\gamma)p}\prescript{}{1}{}{\Psi}_{1}\left[\begin{array}{ccc}
	(d+1,1) \\ ((\alpha-\beta)d+\alpha+\beta l+(\beta-\gamma)p,\alpha-\beta) 
	\end{array} \Big| \lambda_{3}z^{\alpha-\beta}
	\right]\\
	&=\sum_{l=0}^{\infty}\sum_{p=0}^{\infty}\sum_{k=0}^{\infty}\frac{\lambda^{l}_{1}\lambda^{p}_{2}\lambda^{k}_{3}}{l!p!k!}\frac{\Gamma(l+p+k+1)z^{l\alpha+p(\alpha-\gamma)+k(\alpha-\beta)}}{\Gamma(l\alpha+p(\alpha-\gamma)+k(\alpha-\beta)+\alpha)}\\
	&=\sum_{l=0}^{\infty}\sum_{p=0}^{\infty}\sum_{k=0}^{\infty}\binom{l+p+k}{l,p,k}\frac{\lambda^{l}_{1}\lambda^{p}_{2}\lambda^{k}_{3}z^{l\alpha+p(\alpha-\gamma)+k(\alpha-\beta)}}{\Gamma(l\alpha+p(\alpha-\gamma)+k(\alpha-\beta)+\alpha)}\\
	&=E_{\alpha,\alpha-\gamma,\alpha-\beta, \alpha}(\lambda_{1}z^{\alpha}, \lambda_{2}z^{\alpha-\gamma}, \lambda_{3}z^{\alpha-\beta}).
	\end{align*}
	In other words, a particular solution can be represented  as follows:
	\begin{equation*}
	\tilde{x}(r)=\int_{0}^{r}(r-s)^{\alpha-1}E_{\alpha,\alpha-\gamma,\alpha-\beta, \alpha}(\lambda_{1}(r-s)^{\alpha}, \lambda_{2}(r-s)^{\alpha-\gamma}, \lambda_{3}(r-s)^{\alpha-\beta})g(s)\mathrm{d}s.
	\end{equation*}
\end{remark}
Thus, in inhomogeneous case we point out that the particular solution is as exactly same as the solution proved in \cite{Kilbas}.

The next theorem present the structure of representation for an exact analytical solutions to \eqref{1.1}. The proof of theorem is straightaway, so we omit it here.
\begin{thm} \label{superposition}
The analytical solution $y \in C^{1}([0,\infty), \mathbb{R})$ of \eqref{1.1} has the following formula:
\begin{align}\nonumber
y(r)&= \Big(1+ \lambda_{1}r^{\alpha}E_{\alpha,\alpha-\gamma,\alpha-\beta, \alpha+1}(\lambda_{1}r^{\alpha}, \lambda_{2}r^{\alpha-\gamma}, \lambda_{3}r^{\alpha-\beta})\Big)y_0 \\ &+\int_{0}^{r}(r-s)^{\alpha-1}E_{\alpha,\alpha-\gamma,\alpha-\beta, \alpha}(\lambda_{1}(r-s)^{\alpha}, \lambda_{2}(r-s)^{\alpha-\gamma}, \lambda_{3}(r-s)^{\alpha-\beta})g(s)\mathrm{d}s.
	\end{align}
\end{thm}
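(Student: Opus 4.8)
The plan is to deduce the statement from the \textbf{superposition principle} recorded in the Lemma immediately preceding it, so that the whole argument reduces to assembling two facts that have already been proved. Write $L$ for the linear multi-term Caputo operator
\[
(Ly)(r)\coloneqq(\prescript{C}{}D^{\alpha}_{0+}y)(r)-\lambda_{3}(\prescript{C}{}D^{\beta}_{0+}y)(r)-\lambda_{2}(\prescript{C}{}D^{\gamma}_{0+}y)(r)-\lambda_{1}y(r),
\]
so that the Cauchy problem \eqref{1.1} reads $Ly=g$ with $y(0)=y_0$. Since each Caputo differentiation operator acts linearly, $L$ is linear, and this linearity is essentially the only structural input the argument requires.

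First I would set
\[
y_2(r)=\Big(1+\lambda_{1}r^{\alpha}E_{\alpha,\alpha-\gamma,\alpha-\beta,\alpha+1}(\lambda_{1}r^{\alpha},\lambda_{2}r^{\alpha-\gamma},\lambda_{3}r^{\alpha-\beta})\Big)y_0,
\]
and invoke Theorem \ref{unithm}, which certifies that $y_2$ solves the homogeneous problem \eqref{3.2}, i.e.\ $Ly_2=0$ and $y_2(0)=y_0$. Next I would set $y_1$ equal to the convolution integral \eqref{tildex} and invoke the inhomogeneous theorem establishing \eqref{tildex}, which certifies that $y_1$ solves \eqref{3.1}, i.e.\ $Ly_1=g$ with the homogeneous initial value $y_1(0)=0$. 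Forming $y=y_1+y_2$ and using linearity then gives $Ly=Ly_1+Ly_2=g+0=g$, while $y(0)=y_1(0)+y_2(0)=0+y_0=y_0$, which is exactly \eqref{1.1}; moreover $y_1+y_2$ is precisely the formula asserted in the statement. This is the sense in which the proof is ``straightaway''.

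The two genuine loose ends are regularity and uniqueness. For regularity I would check that $y$ inherits the class of its two summands: $y_2$ is a locally uniformly convergent triple power series by Definition \ref{def2} and the convergence remark following it, while $y_1$ is the convolution of the locally integrable kernel $(r-s)^{\alpha-1}E_{\alpha,\alpha-\gamma,\alpha-\beta,\alpha}(\cdots)$ against the continuous datum $g$, so both are differentiable on $(0,\infty)$; the delicate point is the behaviour of the lowest exponent $r^{\alpha}$ near the origin, which must be reconciled with the claimed membership in $C^{1}([0,\infty),\mathbb{R})$. The \textbf{main obstacle}, and the real content suppressed by the phrase ``straightaway'', is \emph{uniqueness}: to pass from ``a solution'' to ``the analytical solution'' one must show that $Ly=0$ with $y(0)=0$ forces $y\equiv 0$. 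I would obtain this by the Laplace transform, applying $\mathcal{L}$ to $Ly=0$ and using the Caputo transform rule (with $y(0)=0$) to get $\big(s^{\alpha}-\lambda_{3}s^{\beta}-\lambda_{2}s^{\gamma}-\lambda_{1}\big)\mathcal{L}\{y\}(s)=0$; since the bracketed symbol is not identically zero on a right half-plane, $\mathcal{L}\{y\}\equiv 0$ there, and injectivity of the Laplace transform yields $y\equiv 0$, so that $y_1+y_2$ is indeed the unique solution.
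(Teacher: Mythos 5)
Your proposal is correct and is essentially the argument the paper intends: the paper omits the proof of Theorem \ref{superposition} as ``straightaway'', implicitly combining the superposition Lemma with Theorem \ref{unithm} (homogeneous part) and the particular-solution formula \eqref{tildex}, which is precisely your decomposition $y=y_{1}+y_{2}$ with linearity of the Caputo operators. Your two supplements go beyond what the paper records but are sound: the Laplace-transform uniqueness argument (using $\mathcal{L}\{\prescript{C}{}D^{\alpha}_{0+}y\}(s)=s^{\alpha}\mathcal{L}\{y\}(s)-s^{\alpha-1}y(0)$ and injectivity of $\mathcal{L}$) supplies the justification for calling $y_{1}+y_{2}$ \emph{the} solution, and your regularity caveat is well placed, since the lowest-order term $\lambda_{1}y_{0}\,r^{\alpha}/\Gamma(\alpha+1)$ has derivative of order $r^{\alpha-1}$, so the claimed membership in $C^{1}([0,\infty),\mathbb{R})$ is itself delicate at $r=0$ for $\alpha<1$ --- a point the paper passes over silently.
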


\section{An illustrative example}\label{sec:ex}
To accomplish this paper, we provide an example to demonstrate the above mentioned results. Let $\alpha=0.8, \beta=0.6,\gamma=0.4$ and $\lambda_1=0.5,\lambda_2=3,\lambda_3=5$. Consider the following Cauchy type  problem for Caputo fractional multi-term FDEs with three independent fractional orders :
\begin{equation} \label{IVP}
\begin{cases}
\left( \prescript{C}{}D^{0.8}_{0^{+}}y\right) (r)-5(\prescript{C}{}D^{0.6}_{0^{+}}y)(r)-3\left( \prescript{C}{}D^{0.4}_{0^{+}}y\right) (r)- 0.5 y(r)= 0, \quad r>0,\\
y(0)=2.
\end{cases}
\end{equation}
Using by the explicit formula \eqref{solution of homogenous case} for the solution of \eqref{uniBiv} can be  represented via triple infinite series:
\begin{equation*}
y(r)= \Big( 1+ \lambda_{1}r^{\alpha}E_{\alpha,\alpha-\gamma,\alpha-\beta,\alpha+1}\left( \lambda_{1}r^{\alpha},\lambda_{2}r^{\alpha-\gamma},\lambda_{3}r^{\alpha-\beta}\right)\Big) y_0,
\end{equation*}
where $r^{\alpha}E_{\alpha,\alpha-\gamma,\alpha-\beta,\alpha+1}$ is the trivariate Mittag-Leffler type function which is given by as below:
\begin{align*}
&r^{\alpha}E_{\alpha,\alpha-\gamma,\alpha-\beta, \alpha+1}(\lambda_{1}r^{\alpha},\lambda_{2}r^{\alpha-\gamma}, \lambda_{3}r^{\alpha-\beta})\\
&=\sum_{l=0}^{\infty}\sum_{p=0}^{\infty}\sum_{k=0}^{\infty}\binom{l+p+k}{l,p,k}\frac{\lambda_{1}^l\lambda_{2}^p\lambda_{3}^k}{\Gamma(l\alpha+p(\alpha-\gamma)+k(\alpha-\beta)+\alpha+1)}r^{l\alpha+p(\alpha-\gamma)+k(\alpha-\beta)+\alpha}.
\end{align*}
Therefore, we can attain that the analytical solution $y(r)\in C^{1}([0,\infty),\mathbb{R})$ of the initial value problem \eqref{IVP} can be represented via newly defined trivariate Mittag-Leffler type function as below:
\begin{equation}\label{sol}
y(r)= 2+ r^{0.8}E_{0.8,0.4,0.2,1.8}\left( 0.5r^{0.8},3r^{0.4},5r^{0.2}\right).
\end{equation}
Now, we are going to illustrate example for the solution of $y(r) \in C^{1}([0,\infty),\mathbb{R})$ in \eqref{sol}.
\begin{figure}[H]
	\centering
	\includegraphics[width=0.6\linewidth]{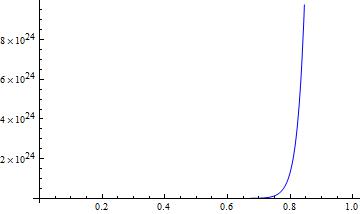}
	\caption{The graph of $y(r)$}
	\label{fig:fig4}
\end{figure}
\vspace{0.5cm}
\section{Conclusion}\label{sec:concl}
In this research work, we have proposed a new M--L function with three variables via  a triple infinite series of powers of $u$, $v$ and $w$ in the complex plane. The new trivariate M--L function arises from a number of various approaches, that motivates us to justify importance of these special functions. The advantage of this work is the solution of special case of  multi-term FDE involving three independent non-integer orders which can be extended to \cite{Joel et al.}-\cite{Al-Refai}. Meanwhile, the trivariate M--L function appears from certain applications in phsyics, e.g. electric circuit theory which can be expressed by means of the trivariate M--L function that will be discussed in the forthcoming paper. One can find the asymptotic expansion of the trivariate M--L function at $\infty$ by using the complex integral representation. Thus, the solution of FDEs system can be represented  in terms of the trivariate M--L functions which will be discussed in the forthcoming papers. 

Furthermore, one can except the results of this paper to hold for a class of problems such as Caputo type time-delay FDEs governed by
\begin{equation*} 
\begin{cases*}
\left( \prescript{C}{}D^{\alpha}_{0+}y\right)(r)-\lambda_{3}\left( \prescript{C}{}D^{\beta}_{0+}y\right)(r)-\lambda_{2}\left( \prescript{C}{}D^{\gamma}_{0+}y\right)(r)-\lambda_{1}y(r-h)=g(r),\quad r > 0, h >0,\\
y(r)=\varphi(r), \quad -h\leq r\leq 0.
\end{cases*}
\end{equation*}

   \end{document}